\newtheorem{theorem}{Theorem}[section]
\newtheorem{lemma}{Lemma}[section]
\newtheorem{remark}{Remark}[section]
\author{H. M. Bui}
\address{Institut f\"ur Mathematik, Universit\"at Z\"urich, Z\"urich CH-8057, Switzerland}
\email{hung.bui@math.uzh.ch}
\title{Large gaps between consecutive zeros of the Riemann zeta-function. II}
\begin{document}
\begin{abstract}
Assuming the Riemann Hypothesis we show that there exist infinitely many consecutive zeros of the Riemann zeta-function whose gaps are greater than $2.9$ times the average spacing.
\end{abstract}

\subjclass[2010]{11M26, 11M06}

\thanks{The author is supported by the University of Zurich Forschungskredit.}
\maketitle

\section{Introduction}

Subject to the truth of the Riemann Hypothesis (RH), the nontrivial zeros of the Riemann zeta-function can be written as $\rho=\tfrac{1}{2}+i\gamma$, where $\gamma\in\mathbb{R}$. Denote consecutive ordinates of zeros by $0<\gamma\leq\gamma'$, we define the normalized gap
\begin{equation*}
\delta(\gamma):=(\gamma'-\gamma)\frac{\log\gamma}{2\pi}.
\end{equation*}
It is well-known that
\begin{displaymath}
N(T):=\sum_{0<\gamma\leq T}1=\frac{T}{2\pi}\log\frac{T}{2\pi}-\frac{T}{2\pi}+O(\log T)
\end{displaymath}
for $T\geq 10$. Hence $\delta(\gamma)$ is $1$ on average. It is expected that there are arbitrarily large and arbitrarily small (normalized) gaps between consecutive zeros of the Riemann zeta-function on the critical line, i.e.
\begin{equation*}
\lambda:=\limsup_{\gamma}\delta(\gamma)=\infty\quad\textrm{and}\ \ \mu:=\liminf_{\gamma}\delta(\gamma)=0.
\end{equation*}
In this article, we focus only on the large gaps, and prove the following theorem.

\begin{theorem}
Assuming RH. Then we have $\lambda>2.9$.
\end{theorem}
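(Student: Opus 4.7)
The plan is to invoke the mollified Wirtinger-type scheme of Mueller, Montgomery--Odlyzko, Conrey--Ghosh--Gonek, Hall and Bredberg, which reduces the problem to a variational one on mean values of $\zeta$ times a Dirichlet polynomial mollifier. Fix $y=T^{\theta}$ with $\theta<\tfrac12$, a mollifier $B(s)=\sum_{n\le y}b(n)n^{-s}$ with real coefficients, and set $H(s)=\zeta(s)B(s)$. For a real parameter $R$ and $L=\log T$, the auxiliary function $g(t)=e^{iRt/L}H(\tfrac12+it)$ vanishes at every ordinate of a nontrivial zero of $\zeta$. Assuming for contradiction that every normalized gap in $[T,2T]$ satisfies $\delta(\gamma)\le\lambda_{0}$, the Wirtinger inequality applied on each zero-to-zero interval and summed yields
$$
\int_T^{2T}|g'(t)|^{2}\,dt\;\ge\;\Bigl(\frac{L}{2\lambda_{0}}\Bigr)^{\!2}\!\int_T^{2T}|g(t)|^{2}\,dt+o(TL^{2}).
$$
Writing $g'(t)=ie^{iRt/L}\bigl((R/L)H+H'\bigr)$, introducing
$$
I_{0}=\int_T^{2T}|H|^{2}dt,\quad I_{1}=\int_T^{2T}\overline{H}\,H'\,dt,\quad I_{2}=\int_T^{2T}|H'|^{2}dt,
$$
and minimising the resulting quadratic in $R$, this is equivalent to the sufficient estimate
$$
\frac{I_{2}}{I_{0}}-\frac{(\Re I_{1})^{2}}{I_{0}^{2}}\;<\;\frac{L^{2}}{4\lambda_{0}^{2}},
$$
which, for $\lambda_{0}=2.9$ and some admissible $B$, forces $\lambda>2.9$.

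The three moments arise by differentiating at $\alpha=\beta=0$ the mollified shifted second moment
$$
\mathcal{I}(\alpha,\beta)=\int_T^{2T}\zeta(\tfrac12+\alpha+it)\,\zeta(\tfrac12+\beta-it)\,|B(\tfrac12+it)|^{2}\,dt,
$$
whose asymptotic evaluation under RH, in the range $\theta<\tfrac12$, is the now-standard Conrey--Ghosh--Gonek computation: expand one $\zeta$-factor via an approximate functional equation, extract the diagonal contribution, and recover subleading terms as residues of a suitable contour integral. The outcome is $I_{j}\sim c_{j}(\theta,P)\,TL^{j+1}$ for explicit functionals $c_{j}$ depending on $\theta$ and on the generating polynomial of $B$. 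I would take
$$
b(n)=\mu(n)\,P\!\Bigl(\frac{\log(y/n)}{\log y}\Bigr)
$$
for a real polynomial $P$ with $P(0)=0$, and, in order to surpass the earlier bound of Bredberg, augment $B$ with additional structured pieces exploiting more of the admissible length $n\le T^{\theta}$, in the spirit of the refinements made in the author's previous work.

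Substituting the closed-form expressions for the $c_{j}$ converts the sufficient condition into a finite-dimensional optimisation over $\theta$, $R$, and the coefficients of $P$ (together with any additional mollifier parameters). The main obstacle is precisely this last step: the derivative moments $I_{1}$ and $I_{2}$ give double integrals of $P$ and $P'$ whose closed forms require careful residue bookkeeping, the error terms in $\mathcal{I}(\alpha,\beta)$ must be controlled uniformly in the mollifier parameters, and the resulting quadratic variational problem must then be solved with enough flexibility in $B$ to clear the numerical threshold $\lambda_{0}=2.9$, rather than merely reproducing the classical record near $2.77$. The gain comes from the extra pieces in $B$ and from the choice of $R$ near the joint optimum; verifying that the combined optimisation strictly exceeds $2.9$ in the limit $T\to\infty$ is where the real work of the paper lies.
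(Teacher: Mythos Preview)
Your route is genuinely different from the paper's. The paper does \emph{not} use Wirtinger's inequality at all; it follows Mueller's discrete scheme, taking $H(s)=\zeta(s)\sum_{n\le y}h(n)P([n]_y)n^{-s}$ with the divisor-type weight $h(n)=d(n)$ on square-free $n$ (so an \emph{amplifier} mimicking $\zeta^2$, not a mollifier mimicking $1/\zeta$), and compares the discrete mean $\mathcal{M}_2(H,T;c)=\int_{-c/L}^{c/L}\sum_{0<\gamma\le T}|H(\tfrac12+i(\gamma+\alpha))|^2\,d\alpha$ to the continuous mean $\mathcal{M}_1(H,T)$. The main technical work is in evaluating $\mathcal{M}_2$: this produces an exponential sum $\sum_n h(n)P([n]_y)n^{-1}\sum_{m\le nT/2\pi}a(m)e(-m/n)$ which Ng could handle only under GRH. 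The paper's point is to remove GRH here, via the Bui--Heath-Brown device (a Vaughan-type identity plus the hybrid large sieve), at the cost of the constraint $P(0)=P'(0)=0$; a numerical search over degree-$6$ polynomials with this vanishing then yields $h(2.9\pi)<1$.

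Your proposal, by contrast, has a genuine gap. You correctly recognise that the Wirtinger scheme with $b(n)=\mu(n)P(\log(y/n)/\log y)$ and $\theta<\tfrac12$ is essentially the Hall--Bredberg setup, which gives only the record near $2.77$; the entire content of the theorem is whatever pushes past that, and here you say only ``augment $B$ with additional structured pieces \ldots\ in the spirit of the refinements made in the author's previous work.'' But the author's previous large-gap paper [\textbf{\ref{B}}] uses Mueller's method with $d_r$-amplifiers, not Wirtinger with a longer or multi-piece $\mu$-mollifier, so that analogy supplies no concrete object $B$. Without specifying those extra pieces you have nothing to feed into your $c_j(\theta,P)$ and no optimisation to carry out; the proposal reduces to ``choose $B$ so that the inequality holds,'' which is precisely the open question. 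Note too that the choice $b(n)=\mu(n)P(\cdot)$ is in the wrong direction for large gaps in Mueller-type arguments (one wants $|H|$ to be spiky, hence $d_r$-weights), so even ``borrowing'' from [\textbf{\ref{B}}] would require switching the coefficient family entirely.
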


Very little is known about $\lambda$ unconditionally. Selberg [\textbf{\ref{S}}] remarked that he could prove $\lambda>1$. Conditionally, Bredberg [\textbf{\ref{B1}}] showed that $\lambda>2.766$ under the assumption of RH (see also [\textbf{\ref{M}},\textbf{\ref{MO}},\textbf{\ref{CGG3}},\textbf{\ref{H}},\textbf{\ref{BMN}},\textbf{\ref{FW1}}] for work in this direction), and on the Generalized Riemann Hypothesis (GRH) it is known that $\lambda>3.072$ [\textbf{\ref{FW}}] (see also [\textbf{\ref{CGG2}},\textbf{\ref{Ng}},\textbf{\ref{B}}]). These results either use Hall's approach using Wirtinger's inequality, or exploit the following idea of Mueller [\textbf{\ref{M}}].

Let $H:\mathbb{C}\rightarrow\mathbb{C}$ and consider the following functions
\begin{equation*}
\mathcal{M}_1(H,T)=\int_{0}^{T}\big|H(\tfrac{1}{2}+it)\big|^2dt
\end{equation*}
and
\begin{equation*}
\mathcal{M}_2(H,T;c)=\int_{-c/L}^{c/L}\sum_{0<\gamma\leq T}\big|H(\tfrac{1}{2}+i(\gamma+\alpha))\big|^2d\alpha,
\end{equation*}
where $L=\log\frac{T}{2\pi}$. We note that if
\begin{equation*}
h(c):=\frac{\mathcal{M}_2(H,T;c)}{\mathcal{M}_1(H,T)}<1
\end{equation*}
as $T\rightarrow\infty$, then $\lambda>c/\pi$, and if $h(c)>1$ as $T\rightarrow\infty$, then $\mu<c/\pi$.

Mueller [\textbf{\ref{M}}] applied this idea to $H(s)=\zeta(s)$. Using $H(s)=\sum_{n\leq T^{1-\varepsilon}}d_{2.2}(n)n^{-s}$, where
the arithmetic function $d_k(n)$ is defined in terms of the Dirichlet series
\begin{equation*}
\zeta(s)^k=\sum_{n=1}^{\infty}\frac{d_k(n)}{n^s}\qquad(\sigma>1)
\end{equation*}
for any real number $k$, Conrey, Ghosh and Gonek [\textbf{\ref{CGG1}}] showed that $\lambda>2.337$. Later, assuming GRH, they applied to $H(s)=\zeta(s)\sum_{n\leq T^{1/2-\varepsilon}}n^{-s}$ and obtained $\lambda>2.68$ [\textbf{\ref{CGG2}}]. By considering a more general choice
\begin{equation*}
H(s)=\zeta(s)\sum_{n\leq T^{1/2-\varepsilon}}\frac{d_r(n)P(\frac{\log y/n}{\log y})}{n^s},
\end{equation*}
where $P(x)$ is a polynomial, Ng [\textbf{\ref{Ng}}] improved that result to $\lambda>3$ (using $r=2$ and $P(x)=(1-x)^{30}$). In the last two papers, GRH is needed to estimate some certain exponential sums resulting from the evaluation of the discrete mean value over the zeros in $\mathcal{M}_2(H,T;c)$. Recently, Bui and Heath-Brown [\textbf{\ref{BH-B}}] showed how one can use a generalization of the Vaughan identity and the hybrid large sieve inequality to circumvent the assumption of GRH for such exponential sums. Here we use that idea to obtain a weaker version of Ng's result without provoking GRH. It is possible that Feng and Wu's result $\lambda>3.072$ can also be obtained just assuming RH by this method. However, we opt to work on Ng's result for simplicity.

Instead of using the divisor function $d(n)=d_2(n)$, we choose
\begin{equation*}
H(s)=\zeta(s)\sum_{n\leq y}\frac{h(n)P(\frac{\log y/n}{\log y})}{n^s},
\end{equation*}
where $y=T^{\vartheta}$, $P(x)$ is a polynomial and $h(n)$ is a multiplicative function satisfying
\begin{equation}\label{500}
h(n)=\left\{ \begin{array}{ll}
d(n) &\qquad \textrm{if $n$ is square-free,}\\
0 & \qquad\textrm{otherwise.} 
\end{array} \right.
\end{equation}
In Section 3 and Section 4 we shall prove the following two key lemmas.

\begin{lemma}
Suppose $0<\vartheta<\tfrac{1}{2}$. We have
\begin{eqnarray*}
\mathcal{M}_1(H,T)=\frac{AT(\log y)^{9}}{6}\int_{0}^{1}(1-x)^{3}\bigg(\vartheta^{-1}P_{1}(x)^2-2P_{1}(x)P_{2}(x)\bigg)dx+O(TL^8),
\end{eqnarray*}
where
\begin{equation*}
A=\prod_{p}\bigg(1+\frac{8}{p}\bigg)\bigg(1-\frac{1}{p}\bigg)^{8}
\end{equation*}
and
\begin{equation*}
P_{r}(x)=\int_{0}^{x}t^{r}P(x-t)dt.
\end{equation*}
\end{lemma}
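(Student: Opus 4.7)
The plan is to reduce $\mathcal{M}_1(H,T)$ to a weighted double divisor sum, convert that sum to a contour integral via the Dirichlet series for $h$, and extract the main term from the residue at the origin. Writing $H(s) = \zeta(s) A(s)$ with $A(s) = \sum_{n \leq y} a(n) n^{-s}$ and $a(n) = h(n) P(\log(y/n)/\log y)$, and using $y = T^\vartheta$ with $\vartheta < 1/2$, the Ingham/Balasubramanian--Conrey--Heath-Brown mean-value formula for $|\zeta|^2$ twisted by a short Dirichlet polynomial gives
\begin{equation*}
\mathcal{M}_1(H,T) = T \sum_{m,n \leq y} \frac{a(m) a(n) (m,n)}{mn}\left(\log\frac{T(m,n)^2}{2\pi mn} + 2\gamma - 1\right) + O(TL^8),
\end{equation*}
reducing the problem to an asymptotic evaluation of the double divisor sum on the right.

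Next, I would introduce contour-integral representations: with $\tilde P(u) = \sum_k c_k k!(\log y)^{-k} u^{-k-1}$ for $P(x) = \sum_k c_k x^k$, one has for $m \leq y$ that $P(\log(y/m)/\log y) = \frac{1}{2\pi i}\int_{(\epsilon)} (y/m)^u \tilde P(u)\,du$. Inserting this representation for both $m$ and $n$, and handling the weight $\log(T(m,n)^2/(2\pi mn))$ by a shift-and-differentiate trick (introduce parameters $a,b$ so that $(\partial_a + \partial_b)|_{a=b=0}$ of $\sum h(m)h(n)(m,n)^{1+a+b}/(m^{1+s_1+a} n^{1+s_2+b})$ recovers the variable log-piece $2\log(m,n) - \log m - \log n$, and write the constant $\log T = (\log y)/\vartheta$), the sum becomes a multi-dimensional contour integral whose integrand contains the Dirichlet series
\begin{equation*}
F(s_1,s_2) := \sum_{m,n} \frac{h(m) h(n) (m,n)}{m^{1+s_1} n^{1+s_2}}.
\end{equation*}
Since $h$ is supported on squarefrees with $h(p) = 2$, direct computation of the local factor $1 + 2 p^{-1-s_1} + 2 p^{-1-s_2} + 4 p^{-1-s_1-s_2}$ yields
\begin{equation*}
F(s_1,s_2) = \zeta(1+s_1)^2 \zeta(1+s_2)^2 \zeta(1+s_1+s_2)^4 G(s_1, s_2),
\end{equation*}
where $G$ is holomorphic in a neighborhood of the origin and $G(0,0) = \prod_p (1-1/p)^8 (1 + 8/p) = A$.

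Finally, I would shift the contours past the $8$th-order pole of $F$ at $s_1 = s_2 = 0$; combined with the extra logarithm from the weight, this produces $(\log y)^9$, while all contributions from the shifted contours are absorbed in $O(TL^8)$. The residue calculation simplifies under the rescaling $s_j = v_j/\log y$, which turns the $\zeta$-factors into rational functions in the $v_j$; the resulting $u$- and $v$-integrals then collapse via convolution to the auxiliary functions $P_r(x) = \int_0^x t^r P(x-t)\,dt$. Tracing the origin of each piece: the $\vartheta^{-1} P_1(x)^2$ term comes from the constant $\log T = (\log y)/\vartheta$ piece of the weight, the $-2 P_1(x) P_2(x)$ term from the variable piece $2\log(m,n) - \log m - \log n$, and the factor $(1-x)^3$ emerges from integrating the $\zeta$-poles against the $(y/n)^u$-type factors. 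The main obstacle is precisely this residue computation: tracking all the contributions from the $8$th-order pole and reducing them to the compact form $\int_0^1 (1-x)^3 [\vartheta^{-1} P_1(x)^2 - 2 P_1(x) P_2(x)] dx$ demands careful combinatorial bookkeeping, whereas the remaining contour estimates are routine.
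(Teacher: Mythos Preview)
Your proposal is correct in outline and would yield the lemma, but it takes a genuinely different route from the paper. Both arguments start from the Balasubramanian--Conrey--Heath-Brown mean-value formula, but diverge immediately after. The paper proceeds elementarily: it unfolds the gcd via M\"obius inversion, writing $f((m,n)) = \sum_{l\mid m,\,l\mid n}\sum_{d\mid l}\mu(d)f(l/d)$, then simplifies the logarithmic weight to $\log(T/2\pi mn)$ with an acceptable error, splits the sum into a ``$\log T$'' piece and a ``$\log m + \log n$'' piece, and evaluates each by iterated application of Selberg-type divisor lemmas (Lemmas~2.3--2.5 of the paper). The constant $A$ emerges as the product $C^2D$ of two separate arithmetical factors coming from these lemmas. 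Your approach instead packages the entire double sum into a two-variable Dirichlet series $F(s_1,s_2)$, factors out the singular part $\zeta(1+s_1)^2\zeta(1+s_2)^2\zeta(1+s_1+s_2)^4$, and reads off $A$ directly as the value of the correction factor at the origin; the integral expression in $P_1,P_2$ then falls out of a single residue at an eighth-order pole.

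Each method has its merits. The paper's approach is more hands-on and keeps the error tracking at the level of partial summation, so one never has to justify contour shifts in several complex variables or bound the Dirichlet series on vertical lines; it also dovetails with the lemmas already needed for $\mathcal{M}_2$. Your Mellin/residue approach is more structural: the identification of the polar part explains at a glance why the answer has degree $9$ in $\log y$ and why the constant is $A$, and it generalizes painlessly to $h=d_r$ on squarefrees. The price, as you note, is the bookkeeping at the high-order pole and the (routine but nontrivial) verification that the shifted contours contribute only $O(TL^8)$.
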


\begin{lemma}
Suppose $0<\vartheta<\tfrac{1}{2}$ and $P(0)=P'(0)=0$. We have
\begin{eqnarray*}
\sum_{0<\gamma\leq T}H(\rho+i\alpha)H(1-\rho-i\alpha)&=&\frac{ATL(\log y)^{9}}{6\pi}\int_{0}^{1}(1-x)^{3}\emph{Re}\bigg\{{\sum_{j=1}^{\infty}(i\alpha\log y)^jB(j;x)}\bigg\}dx\\
&&\qquad+O_\varepsilon(TL^{9+\varepsilon})
\end{eqnarray*}
uniformly for $\alpha\ll L^{-1}$, where
\begin{eqnarray*}
&&B(j;u)=-\frac{2P_1(u)P_{j+2}(u)}{(j+2)!}+\frac{2\vartheta P_2(u)P_{j+2}(u)}{(j+2)!}+\frac{4\vartheta P_1(u)P_{j+3}(u)}{(j+3)!}\nonumber\\
&&\qquad-\frac{\vartheta}{(j+2)!}\int_{0}^{u}t(\vartheta^{-1}-t)^{j+2}P_{1}(u)P(u-t)dt\\
&&\qquad+\frac{\vartheta}{(j+1)!}\int_{0}^{u}t(\vartheta^{-1}-t)^{j+1}P_{2}(u)P(u-t)dt-\frac{\vartheta}{6j!}\int_{0}^{u}t(\vartheta^{-1}-t)^{j}P_{3}(u)P(u-t)dt.
\end{eqnarray*}
\end{lemma}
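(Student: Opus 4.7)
The plan is to convert the discrete sum over zeros into a contour integral via Cauchy's residue theorem applied to $\zeta'(s)/\zeta(s)\cdot F(s)$, where $F(s):=H(s+i\alpha)H(1-s-i\alpha)$. Taking $c = 1 + L^{-1}$ and integrating around the rectangle with corners $1-c+i$, $c+i$, $c+iT$, $1-c+iT$ (perturbed to avoid any zeros on the boundary), the horizontal sides are absorbed into the $O_\varepsilon(TL^{9+\varepsilon})$ error by standard RH-conditional bounds for $\zeta$ and $\zeta'/\zeta$; the residues from the poles of $\zeta'/\zeta$ at $s=1$ and of $\zeta(s+i\alpha)$, $\zeta(1-s-i\alpha)$ at $s=1-i\alpha$, $s=-i\alpha$ are annihilated to the required order by the hypothesis $P(0)=P'(0)=0$.

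On the right edge $\sigma = c$, applying the functional equation $\zeta(1-s-i\alpha) = \chi(1-s-i\alpha)\,\zeta(s+i\alpha)$ gives
\[
F(s) = \chi(1-s-i\alpha)\,\zeta(s+i\alpha)^{2}\,M(s+i\alpha)\,M(1-s-i\alpha),
\]
after which every factor has a convergent Dirichlet expansion on this line. Using $\zeta'/\zeta=-\sum_n \Lambda(n)n^{-s}$, $\zeta(s+i\alpha)^{2}=\sum_\ell d(\ell)\ell^{-s-i\alpha}$, and the Stirling asymptotic $\chi(1-s-i\alpha)\sim(t/2\pi)^{s-1/2+i\alpha}$, the $t$-integration localises to the diagonal $m\cdot t/(2\pi)\asymp k\ell n$, and the resulting Dirichlet sum factors as an Euler product producing the constant $A=\prod_p(1+8/p)(1-1/p)^{8}$. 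The polynomial weights combine, after repeated integration by parts, into the $P_r(u)$, while Taylor-expanding $\chi(1-s-i\alpha)$ together with $m^{i\alpha}$ and $(k\ell)^{-i\alpha}$ around $\alpha=0$ produces the series $\sum_{j\geq 1}(i\alpha\log y)^j B(j;x)$, whose coefficients match the six terms defining $B(j;u)$.

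On the left edge $\sigma = 1-c$, using $\zeta'(s)/\zeta(s) = \chi'(s)/\chi(s) - \zeta'(1-s)/\zeta(1-s)$ and expanding the remaining zeta factors and $M(s+i\alpha)$ as Dirichlet series, the integrand recasts (after a stationary-phase treatment of the $t$-integral involving $\chi$) into an exponential sum essentially of shape
\[
\sum_{k,\ell,m\leq y}\frac{h(k)h(\ell)h(m)P(\cdot)P(\cdot)}{\sqrt{k\ell m}}\sum_n \Lambda(n)\,e\!\bigl(\pm 2\sqrt{k\ell m n/(2\pi)}\bigr).
\]
Under GRH the inner sum in $n$ could be bounded directly; in its absence, we follow Bui--Heath-Brown and decompose $\Lambda(n)$ by a Heath-Brown identity into Type~I and Type~II bilinear forms, each of which is estimated via the hybrid large sieve inequality. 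The condition $\vartheta<1/2$ ensures the total length $k\ell m\leq T^{3\vartheta}$ lies within the effective range of the sieve, giving an overall contribution of $O_\varepsilon(TL^{9+\varepsilon})$.

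The main obstacle is precisely this exponential-sum estimate on the left edge: arranging the Heath-Brown decomposition so that the resulting bilinear forms have parameters in the optimal range for the hybrid large sieve, and controlling all logarithmic losses within $L^\varepsilon$ in order to match the stated error term exactly. By contrast, the main-term bookkeeping on the right, while intricate, is a finite residue computation in which the vanishing conditions on $P$ cleanly eliminate every potential spurious main term.
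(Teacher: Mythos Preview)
Your proposal misidentifies both where the exponential sum arises and what form it takes, and it misattributes the role of the hypothesis $P(0)=P'(0)=0$.

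In the paper, the left edge is handled by the functional-equation identity $\zeta'/\zeta(1-s)=\chi'/\chi(1-s)-\zeta'/\zeta(s)$, which reduces it to the complex conjugate of the right-edge integral $\mathcal{N}_1$ plus a term $\mathcal{N}_2$ involving $\chi'/\chi$; the latter is moved to the critical line and evaluated directly from Lemma~1.1. The genuine difficulty is entirely in $\mathcal{N}_1$, the \emph{right} edge. There the paper applies the Conrey--Ghosh--Gonek lemma (Lemma~2.1) to convert $\int\chi(1-s)A(s)B(1-s)\,ds$ into an additive-twist sum $\sum_{n}\frac{b(n)}{n}\sum_{m\leq nT/2\pi}a(m)\,e(-m/n)$; there is no Voronoi-type phase $e(\pm 2\sqrt{\cdot})$ anywhere. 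The Bui--Heath-Brown input is then used to replace $\sum_m a(m)e(-m/n)$ by the arithmetic sum $\mathcal{Q}(\beta,\gamma)$ in \eqref{2}, and the bulk of the proof is the decomposition of $\mathcal{Q}$ via Lemma~4.1 into six pieces $\mathcal{Q}_1,\ldots,\mathcal{Q}_6$, each evaluated through the chain of Lemmas~\ref{600}--\ref{602}. This is not a ``finite residue computation'' but the heart of the argument.

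The condition $P(0)=P'(0)=0$ does not serve to kill pole residues of $\zeta$ on the contour; rather it is required in Lemma~\ref{600}, a prime-number-theorem variant for the sum $\sum_{n}\mu(n)h(n)P([an]_y)/\varphi(n)$, which must vanish to the correct order for the error terms in the $\mathcal{Q}_j$ to be admissible (see Remark~1.2). Finally, the lemma is unconditional: the horizontal segments are bounded by $O_\varepsilon(yT^{1/2+\varepsilon})$ without RH. Your sketch as written would not produce the main term, because the mechanism that generates it---the additive twist $e(-m/n)$ on the right edge followed by the CGG/BHB extraction of $\mathcal{Q}$---is absent.
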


\noindent\textit{Proof of} Theorem 1.1. We take $\vartheta=\tfrac{1}{2}^{-}$. On RH we have
\begin{equation*}
\sum_{0<\gamma\leq T}\big|H(\tfrac{1}{2}+i(\gamma+\alpha))\big|^2=\sum_{0<\gamma\leq T}H(\rho+i\alpha)H(1-\rho-i\alpha).
\end{equation*}
Note that this is the only place we need to assume RH. Lemma 1.2 then implies that
\begin{eqnarray*}
\int_{-c/L}^{c/L}\sum_{0<\gamma\leq T}\big|H(\tfrac{1}{2}+i(\gamma+\alpha))\big|^2d\alpha\sim\frac{AT(\log y)^{9}}{6\pi}\sum_{j=1}^{\infty}\frac{(-1)^jc^{2j+1}}{2^{2j-1}(2j+1)}\int_{0}^{1}(1-x)^{3}B(2j;x)dx.
\end{eqnarray*}
Hence
\begin{eqnarray*}
h(c)=\frac{1}{2\pi}\frac{\sum_{j=1}^{\infty}\frac{(-1)^jc^{2j+1}}{2^{2j-1}(2j+1)}\int_{0}^{1}(1-x)^{3}B(2j;x)dx}{\int_{0}^{1}(1-x)^{3}(P_1(x)^2-P_1(x)P_2(x))dx}+o(1),
\end{eqnarray*}
as $T\rightarrow\infty$. Consider the polynomial $P(x)=\sum_{j=2}^{M}c_jx^j$. Choosing $M=6$ and running Mathematica's Minimize command, we obtain $\lambda>2.9$. Precisely, with 
\begin{eqnarray*}
P(x)=1000x^2 - 9332x^3 + 30134x^4 - 40475x^5 + 19292x^6,
\end{eqnarray*}
we have
\begin{eqnarray*}
h(2.9\pi)=0.99725\ldots<1,
\end{eqnarray*}
and this proves the theorem.

\begin{remark} 
\emph{The above lemmas are unconditional. We note that in the case $r=2$ apart from the arithmetical factor $a_3$ being replaced by $A$, Lemma 1.1 is the same as what stated in [\textbf{\ref{Ng}}; Lemma 2.1] (see also [\textbf{\ref{B}}; Lemma 2.3]), while Lemma 1.2, under the additional condition $P(0)=P'(0)=0$, recovers Theorem 2 of Ng [\textbf{\ref{Ng}}] (and also Lemma 2.6 of Bui [\textbf{\ref{B}}]) without assuming GRH, though the latters are written in a slightly different and more complicated form. This is as expected because replacing the divisor function $d(n)$ by the arithmetic function $h(n)$ (as defined in \eqref{500}) in the definition of $H(s)$ only changes the arithmetical factor in the resulting mean value estimates. This substitution, however, makes our subsequent calculations much easier. Our arguments also work if we set $h(n)=d_r(n)$ when $n$ is square-free for some $r\in\mathbb{N}$ without much changes, but we choose $r=2$ to simplify various statements and expressions in the paper.}
\end{remark}

\begin{remark}
\emph{In the course of evaluating $\mathcal{M}_2(H,T;c)$, we encounter an exponential sum of type (see Section 4.2)
\begin{equation*}
\sum_{n\leq y}\frac{h(n)P(\frac{\log y/n}{\log y})}{n}\sum_{m\leq nT/2\pi}a(m)e\bigg(-\frac{m}{n}\bigg)
\end{equation*}
for some arithmetic function $a(m)$. At this point, assuming GRH, Ng [\textbf{\ref{Ng}}] applied Perron's formula to the sum over $m$, and then moved the line of integration to $\textrm{Re}(s)=1/2+\varepsilon$. The main term arises from the residue at $s=1$ and the error terms in this case are easy to handle. To avoid being subject to GRH, we instead use the ideas in [\textbf{\ref{CGG1}}] and [\textbf{\ref{BH-B}}]. That leads to a sum of type
\begin{equation*}
\sum_{n\leq y}\frac{\mu(n)h(n)P(\frac{\log y/n}{\log y})}{n}.
\end{equation*}
This is essentially a variation of the prime number theorem, and here the polynomial $P(x)$ is required to vanish with order at least $2$ at $x=0$ (see Lemma 2.6). As a result, we cannot take the choice $P(x)=(1-x)^{30}$ as in [\textbf{\ref{Ng}}]. Here it is not clear how to choose a ``good" polynomial $P(x)$. Our theorem is obtained by numerically optimizing over polynomials $P(x)$ with degree less than $7$. It is probable that by considering higher degree polynomials, we can establish Ng's result $\lambda>3$ under only RH.}
\end{remark}

\noindent\textbf{Notation.} Throughout the paper, we denote
\begin{equation*}
[n]_y:=\frac{\log y/n}{\log y}.
\end{equation*}
For $Q,R\in C^\infty[(0,1)]$ we define
\begin{equation*}
Q_{r}(x)=\int_{0}^{x}t^{r}Q(x-t)dt\qquad\textrm{and}\qquad R_{r}(x)=\int_{0}^{x}t^{r}R(x-t)dt.
\end{equation*}
We let $\varepsilon>0$ be an arbitrarily small positive number, and can change from time to time.

\section{Various lemmas}

The following two lemmas are in [\textbf{\ref{CGG1}}; Lemma 2 and Lemma 3].

\begin{lemma}\label{501}
Suppose that $A(s)=\sum_{m=1}^{\infty}a(m)m^{-s}$, where $a(m)\ll_\varepsilon m^\varepsilon$, and $B(s)=\sum_{n\leq y}b(n)n^{-s}$, where $b(n)\ll_\varepsilon n^\varepsilon$. Then we have
\begin{eqnarray*}
&&\frac{1}{2\pi i}\int_{a+i}^{a+iT}\chi(1-s)A(s)B(1-s)ds=\sum_{n\leq y}\frac{b(n)}{n}\sum_{m\leq nT/2\pi}a(m)e\bigg(-\frac{m}{n}\bigg)+O_\varepsilon(yT^{1/2+\varepsilon}),
\end{eqnarray*} 
where $a=1+L^{-1}$.
\end{lemma}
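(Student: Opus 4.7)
The plan is to expand $A(s)B(1-s)$ as a double Dirichlet series and reduce the lemma to evaluating, for each pair $(m,n)$, the scalar integral
\[
I(m,n;T) := \frac{1}{2\pi i}\int_{a+i}^{a+iT}\chi(1-s)\,(m/n)^{-s}\,ds.
\]
On the line $\mathrm{Re}(s)=a$ the series $A(s)$ converges absolutely since $a(m)\ll_\varepsilon m^\varepsilon$ and $a = 1+L^{-1}>1$, while $B(1-s)=\sum_{n\leq y}b(n)n^{s-1}$ is a finite sum. Using the identity $m^{-s}n^{s-1} = n^{-1}(m/n)^{-s}$ and interchanging summation with integration (legitimate thanks to absolute convergence combined with the polynomial bound $|\chi(1-s)|\ll t^{a-1/2}$ on the $a$-line), the left-hand side becomes
\[
\sum_{n\leq y}\frac{b(n)}{n}\sum_{m=1}^\infty a(m)\,I(m,n;T).
\]

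Next I would evaluate $I(m,n;T)$ by inserting the standard Stirling asymptotic
\[
\chi(1-s) = \Bigl(\frac{t}{2\pi}\Bigr)^{s-1/2}e^{i(t+\pi/4)}\bigl(1+O(t^{-1})\bigr),\qquad s = a + it,\ t\geq 1,
\]
and treating the resulting oscillatory integral in $t$ by stationary phase. The total phase $t\log(tn/(2\pi m))+t$ has a unique saddle at $t_0 = 2\pi m/n$. When $1\leq t_0\leq T$ the saddle lies inside the contour and produces the main contribution $e(-m/n)$ up to lower-order terms; when $t_0$ lies outside $[1,T]$, repeated integration by parts shows that $I(m,n;T)$ is small. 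The net outcome one aims for is
\[
I(m,n;T) = \mathbf{1}_{m\leq nT/(2\pi)}\,e(-m/n) + R(m,n;T),
\]
with a pointwise bound of the shape
\[
R(m,n;T)\ll (m/n)^{-a}\Bigl(1 + T^{a-1/2}\min\bigl(1,\tfrac{1}{|T-2\pi m/n|}\bigr)\Bigr).
\]

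Finally I would assemble the pieces. The main terms from $I(m,n;T)$ reproduce exactly the double sum in the statement, so the only remaining task is to bound $\sum_{n,m}|b(n)a(m)|\,|R(m,n;T)|/n$. Using $a(m),b(n)\ll m^\varepsilon,n^\varepsilon$, the error is dominated by
\[
T^{a-1/2}\sum_{n\leq y}n^{a-1+\varepsilon}\sum_{m\geq 1}m^{\varepsilon-a}\Bigl(1+\tfrac{1}{|T-2\pi m/n|}\Bigr)\ll_\varepsilon yT^{1/2+\varepsilon},
\]
since the inner $m$-sum is uniformly bounded (the contribution from $m\approx nT/(2\pi)$ adds only a $\log T$ factor absorbed into $T^\varepsilon$), and then $T^{a-1/2}y^{a+\varepsilon}=yT^{1/2+\varepsilon}$ after swallowing $L^{-1}$ into $\varepsilon$. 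The hardest part of the argument is the transitional regime $m\approx nT/(2\pi)$, where the saddle approaches the endpoint of integration: both the stationary-phase and the integration-by-parts expansions degrade there, and the sharp $1/|T-2\pi m/n|$ decay in the bound for $R$ is essential to avoid losing a full power of $T$ in the final error. Once this sharp estimate is secured, Step 3 is routine.
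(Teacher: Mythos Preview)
The paper does not prove this lemma at all: it simply cites Conrey--Ghosh--Gonek [\textbf{\ref{CGG1}}, Lemma~2]. Your outline is essentially the CGG argument---expand as a double series, insert the Stirling approximation for $\chi(1-s)$, and apply stationary phase/integration by parts to the resulting oscillatory integral in $t$---so there is nothing substantively new to compare.

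One small slip to fix: on the line $s=a+it$ with $t\geq 1$ the correct asymptotic is
\[
\chi(1-s)=\Bigl(\frac{t}{2\pi}\Bigr)^{s-1/2}e^{-i(t+\pi/4)}\bigl(1+O(t^{-1})\bigr),
\]
with a minus sign in the exponential. With your sign the phase $t\log(tn/(2\pi m))+t$ does \emph{not} have its stationary point at $t_0=2\pi m/n$ (differentiating gives $\log(tn/(2\pi m))+2$); with the correct sign the phase is $t\log(tn/(2\pi m))-t$, whose derivative $\log(tn/(2\pi m))$ vanishes exactly at $t_0=2\pi m/n$, and the saddle then yields $e(-m/n)$ as claimed. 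This is cosmetic, but worth correcting so the computation is internally consistent.
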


\begin{lemma}\label{300}
Suppose that $A_j(s)=\sum_{n=1}^{\infty}a_{j}(n)n^{-s}$ is absolutely convergent for $\sigma>1$, $1\leq j\leq k$, and that
\begin{equation*}
A(s)=\sum_{n=1}^{\infty}\frac{a(n)}{n^s}=\prod_{j=1}^{k}A_j(s).
\end{equation*}
Then for any $l\in\mathbb{N}$, we have
\begin{equation*}
\sum_{n=1}^{\infty}\frac{a(ln)}{n^s}=\sum_{l=l_1\ldots l_k}\prod_{j=1}^{k}\bigg(\sum_{\substack{n\geq1\\(n,\prod_{i<j}l_i)=1}}\frac{a_{j}(l_jn)}{n^s}\bigg).
\end{equation*}
\end{lemma}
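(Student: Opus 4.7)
The plan is to prove this as a purely formal identity of Dirichlet series by setting up a bijection on factorization data. The left-hand side, using the Dirichlet convolution $a=a_1\ast\cdots\ast a_k$, has coefficient of $n^{-s}$ equal to $\sum_{n_1\cdots n_k=ln}\prod_j a_j(n_j)$. Expanding the right-hand side by writing $m_j$ for the inner summation variable in the $j$-th factor and setting $n_j:=l_jm_j$, $n:=\prod_j m_j$, it becomes a sum over tuples $\big((l_j),(m_j)\big)$ subject to $\prod_j l_j=l$ and $(m_j,\prod_{i<j}l_i)=1$, weighted by $\prod_j a_j(n_j)$ and regrouped by $n=\prod m_j$. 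The lemma thus reduces to the claim that every factorization $n_1\cdots n_k=ln$ arises from a unique such tuple.

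I would verify this bijection prime by prime. Fix a prime $p$ and set $\alpha=v_p(l)$, $\beta=v_p(n)$, and $a_j=v_p(n_j)$, so that $\sum_j a_j=\alpha+\beta$. The coprimality constraint forces $v_p(l_j)=a_j$ (i.e.\ $v_p(m_j)=0$) as soon as some earlier $l_i$ with $i<j$ contains a factor of $p$. Tracing this through, if $\alpha=0$ one must take $v_p(l_j)=0$ for all $j$; if $\alpha>0$, let $j^\ast$ be the smallest index with $a_1+\cdots+a_{j^\ast}>\beta$, and then the unique legitimate assignment is $v_p(l_j)=0$ for $j<j^\ast$, $v_p(l_{j^\ast})=\alpha-\sum_{j>j^\ast}a_j$, and $v_p(l_j)=a_j$ for $j>j^\ast$. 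The minimality of $j^\ast$ together with $\sum_j a_j=\alpha+\beta$ gives $0<v_p(l_{j^\ast})\le a_{j^\ast}$, so the recipe is well defined, and multiplicativity across primes assembles the local choices into a global tuple that recovers $(n_1,\ldots,n_k)$ and is the unique such tuple.

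With the bijection in place, the identity follows by matching terms on the two sides; absolute convergence for $\sigma>1$ guarantees the rearrangements are legitimate. The only step requiring any care is the prime-local analysis above, which is a short combinatorial verification with no analytic input, so I do not anticipate a genuine obstacle. Conceptually, the coprimality constraint $(m_j,\prod_{i<j}l_i)=1$ is a device for selecting a canonical ``greedy'' placement of the prime factors of $l$ among the $k$ parts of the Dirichlet convolution, which is exactly what makes the decomposition unique.
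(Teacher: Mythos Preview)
Your argument is correct. The prime-local bijection is exactly the right mechanism: once you translate the coprimality constraint $(m_j,\prod_{i<j}l_i)=1$ into ``as soon as some earlier $l_i$ carries $p$, all later $m_j$ are prime to $p$,'' the greedy choice of $j^\ast$ as the first index where the running sum of $v_p(n_j)$ exceeds $v_p(n)$ is forced, and your check that $0<v_p(l_{j^\ast})\le a_{j^\ast}$ is clean. Absolute convergence in $\sigma>1$ justifies the rearrangements, as you note.

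As for comparison with the paper: there is nothing to compare against, since the paper does not prove this lemma at all but simply quotes it from Conrey--Ghosh--Gonek [\textbf{\ref{CGG1}}; Lemma~3]. Your write-up is therefore a self-contained substitute for that citation. The combinatorial bijection you give is in fact the standard proof of this identity (and is essentially what one finds in the cited source), so while you have supplied more than the paper does, you have not taken a genuinely different route from the literature.
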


We shall need estimates for various divisor-like sums. Throughout the paper, we let 
\begin{equation*}
F_\tau(n)=\prod_{p|n}\big(1+O(p^{-\tau})\big),
\end{equation*}
for $\tau>0$ and the constant in the $O$-term is implicit and independent of $\tau$.

\begin{lemma}\label{504}
For any $Q\in C^\infty([0,1])$, there exists an absolute constant $\tau_0>0$ such that
\begin{eqnarray*}
&&\emph{(i)}\ \sum_{an\leq y}\frac{h(an)Q([an]_y)}{n}=C(\log y)^2h(a)\prod_{p|a}\bigg(1+\frac{2}{p}\bigg)^{-1}Q_1([a]_y)+O(d(a)F_{\tau_0}(a)L),\\
&&\emph{(ii)}\ \sum_{an\leq y}\frac{h(an)Q([an]_y)\log n}{n}=C(\log y)^3h(a)\prod_{p|a}\bigg(1+\frac{2}{p}\bigg)^{-1}Q_2([a]_y)+O(d(a)F_{\tau_0}(a)L^2),
\end{eqnarray*}
where
\begin{equation*}
C=\prod_{p}\bigg(1+\frac{2}{p}\bigg)\bigg(1-\frac{1}{p}\bigg)^2.
\end{equation*}
\end{lemma}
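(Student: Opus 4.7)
The plan is to reduce both (i) and (ii) to uniform asymptotics for the auxiliary sums $T(x;a) := \sum_{n \leq x,\, (n,a)=1} h(n)/n$ and $T_{\log}(x;a) := \sum_{n \leq x,\, (n,a)=1} h(n) \log n /n$ and then convert back by Riemann--Stieltjes partial summation against the smooth weight $Q([ax]_y)$. We may assume $a$ is squarefree from the outset, since otherwise every $h(an)$ vanishes and $h(a)=0$, so both sides of (i) and (ii) vanish. Writing $h(an) = h(a) h(n) \mathbf{1}_{(n,a) = 1}$ and factoring out $h(a) = d(a)$ (valid for squarefree $a$) reduces matters to estimating the inner sums uniformly in $a$.

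The main analytic input is the Euler-product factorization
\[
F_a(s) := \sum_{\substack{n \geq 1 \\ (n,a) = 1}} \frac{h(n)}{n^s} = \zeta(s)^{2} B(s) G_a(s), \quad B(s) := \prod_p\bigl(1 + \tfrac{2}{p^s}\bigr)\bigl(1 - \tfrac{1}{p^s}\bigr)^{2}, \ \ G_a(s) := \prod_{p\mid a}\bigl(1 + \tfrac{2}{p^s}\bigr)^{-1},
\]
where $B$ is holomorphic and bounded in any half-plane $\Re s \geq 1/2 + \eta$, with $B(1) = C$ and $G_a(1) = \prod_{p\mid a}(1+2/p)^{-1}$. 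I would apply Perron's formula in truncated form, then shift the contour past the double pole of $F_a(s)$ at $s = 1$ to a line inside the classical zero-free region for $\zeta$, obtaining
\[
T(x;a) = \tfrac{C G_a(1)}{2}(\log x)^{2} + c_1(a) \log x + c_0(a) + R(x;a),
\]
with the subleading coefficients $c_j(a)$ expressible in terms of $B$, $G_a$ and their first derivatives at $s = 1$. The same argument for $T_{\log}(x;a)$, with $-F_a'$ in place of $F_a$, produces the leading term $\tfrac{C G_a(1)}{3}(\log x)^{3}$ from the triple pole at $s = 1$.

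Partial summation then converts, for (i),
\[
\sum_{\substack{n \leq y/a \\ (n,a)=1}} \frac{h(n) Q([an]_y)}{n} = \int_{1^-}^{y/a} Q([ax]_y)\, dT(x;a).
\]
Substituting the leading piece $\tfrac{C G_a(1)}{2}(\log x)^{2}$, integrating by parts, and changing variables $u = \log x/\log y$ (so $[ax]_y = [a]_y - u$ and $dx/(x\log y) = du$), the boundary term at $x = y/a$ cancels against the integrated term, leaving the main contribution
\[
C G_a(1)(\log y)^{2}\int_{0}^{[a]_y} u\, Q([a]_y - u)\, du = C G_a(1)(\log y)^{2} Q_1([a]_y).
\]
Multiplying by $h(a) = d(a)$ recovers the main term of (i); the subleading pieces $c_1(a) \log x$, $c_0(a)$, and the remainder integrate to $O(F_{\tau_0}(a) L)$, giving the final error $O(d(a) F_{\tau_0}(a) L)$ after the outer $h(a)$. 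Part (ii) follows identically: the extra pole order produces an additional $\log y$ factor and replaces $Q_1([a]_y)$ by $Q_2([a]_y)$.

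The main obstacle will be securing the uniformity in $a$, namely that $c_j(a)$ and the remainder $R(x;a)$ are dominated by $F_{\tau_0}(a)$ for some absolute $\tau_0 > 0$. The coefficients $c_j(a)$ involve $G_a(1)$ and $G_a'(1) = G_a(1)\sum_{p\mid a} \frac{2\log p}{p+2}$, and one must check by elementary Euler-product manipulations that $G_a(1)$, $G_a'(1)$, and their analogues on contours slightly inside $\Re s = 1$ are all $O(F_{\tau_0}(a))$. Combined with standard zero-free region bounds for $\zeta$ on the shifted contour, this yields the claimed uniform error estimate.
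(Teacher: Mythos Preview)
Your proposal is correct and follows the same high-level strategy as the paper: establish an asymptotic for the unsmoothed partial sums $\sum_{n\leq t} h(an)/n$ uniformly in $a$, then recover the smoothed sum by partial summation. The paper's proof is extremely terse, citing ``a method of Selberg'' for the estimate
\[
\sum_{n\leq t}\frac{h(an)}{n}=\frac{C(\log t)^2}{2}h(a)\prod_{p|a}\Bigl(1+\frac{2}{p}\Bigr)^{-1}+O\bigl(d(a)F_{\tau_0}(a)L\bigr)
\]
and then invoking partial summation; part (ii) is deduced directly from (i) via the identity $\log n = \log y\,(1-[an]_y)-\log a$, rather than by repeating the analytic argument with an extra pole.

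The only genuine methodological difference is in how the partial-sum asymptotic is obtained. Selberg's method here is the elementary convolution technique (comparing $h$ against $1\ast 1$ and handling the correction multiplicatively), which yields the one-logarithm saving with error governed by an Euler product of the shape $F_{\tau_0}(a)$ without any appeal to complex analysis. Your route via Perron's formula and a contour shift into the classical zero-free region is equally valid and in fact gives a stronger remainder $R(x;a)$; the subleading coefficients $c_j(a)$ then get absorbed into $O(F_{\tau_0}(a)L)$ exactly as you describe, since quantities like $G_a'(1)/G_a(1)=\sum_{p\mid a}\frac{2\log p}{p+2}$ are easily dominated by $\prod_{p\mid a}(1+Cp^{-\tau_0})$. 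Either approach is acceptable here; the paper's choice is slightly more elementary and avoids having to track the $a$-dependence along the shifted contour, while yours makes the structure of the lower-order terms more transparent.
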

\begin{proof}
By a method of Selberg [\textbf{\ref{S}}] we have
\begin{equation*}
\sum_{n\leq t}\frac{h(an)}{n}=\frac{C(\log t)^2}{2}h(a)\prod_{p|a}\bigg(1+\frac{2}{p}\bigg)^{-1}+O(d(a)F_{\tau_0}(a)L)
\end{equation*}
for any $t\leq T$. The first statement then follows from partial summation. 

The second statement is an easy consequence of the first one.
\end{proof}

\begin{lemma}\label{507}
For any $Q\in C^\infty([0,1])$, we have
\begin{equation*}
\sum_{n\leq y}\frac{h(n)^2\varphi(n)Q([n]_y)}{n^2}\prod_{p|n}\bigg(1+\frac{2}{p}\bigg)^{-2}=\frac{D(\log y)^4}{6}\int_{0}^{1}(1-x)^3Q(x)dx+O(L^3),
\end{equation*}
where
\begin{equation*}
D=\prod_p\bigg[1+\frac{4(p-1)}{p^2}\bigg(1+\frac{2}{p}\bigg)^{-2}\bigg]\bigg(1-\frac{1}{p}\bigg)^4.
\end{equation*}
\end{lemma}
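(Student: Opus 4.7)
The plan is to treat the sum as a smoothly weighted partial sum of a multiplicative function. Since $h(n)=0$ unless $n$ is squarefree, the summand is supported on squarefree integers and, as a function of $n$, equals the multiplicative function
\[
f(n):=\mu^{2}(n)\prod_{p\mid n}\frac{4(p-1)}{p^{2}\bigl(1+\tfrac{2}{p}\bigr)^{2}},
\]
since $h(p)^{2}=4$, $\varphi(p)/p^{2}=(p-1)/p^{2}$, and these local factors combine multiplicatively. Thus the lemma reduces to an asymptotic evaluation of $\sum_{n\leq y}f(n)Q([n]_{y})$.

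Next I would form the Dirichlet series $G(s):=\sum_{n\geq1}f(n)n^{-s}$ and isolate its pole. The Euler product reads
\[
G(s)=\prod_{p}\Bigl(1+\frac{4(p-1)}{p^{s+2}\bigl(1+\tfrac{2}{p}\bigr)^{2}}\Bigr)=\zeta(s+1)^{4}H(s),
\]
where $H(s):=\prod_{p}\bigl(1+\tfrac{4(p-1)}{p^{s+2}(1+2/p)^{2}}\bigr)\bigl(1-p^{-s-1}\bigr)^{4}$. A local expansion shows that each Euler factor of $H$ equals $1+O(p^{-2s-2})$, so $H(s)$ converges absolutely and is analytic for $\operatorname{Re}(s)>-\tfrac{1}{2}$; evaluating at $s=0$ recovers precisely the arithmetic constant $D$ of the statement. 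Hence $G(s)$ has a pole of order four at $s=0$ with leading Laurent coefficient $D$.

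Given this, I would carry out the analogue of the Selberg-type argument used for Lemma 2.1: a standard truncated Perron / contour-shift (equivalently, a Selberg--Delange) computation yields
\[
\sum_{n\leq t}f(n)=\frac{D(\log t)^{4}}{24}+O\bigl((\log t)^{3}\bigr)
\]
uniformly for $2\leq t\leq T$, the main term being the residue of $G(s)t^{s}/s$ at the quintuple pole at $s=0$. The lemma then follows by Abel summation against $Q([t]_{y})$: after the change of variables $u=1-\log t/\log y$ and one integration by parts in $u$, the boundary contribution at $u=0$ cancels the $S(y)Q(0)$ term coming from Abel's identity, and the remaining integral yields
\[
\frac{D(\log y)^{4}}{6}\int_{0}^{1}(1-u)^{3}Q(u)\,du+O(L^{3}).
\]

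The main obstacle is analytic rather than algebraic: one has to secure the error $O((\log t)^{3})$ in the counting asymptotic with enough uniformity to survive the partial summation. This requires either a careful truncated Perron estimate combined with a short contour shift past $s=0$, or an appeal to the Selberg--Delange theorem for multiplicative functions of moderate mean value of $\Omega(n)$. Once that intermediate estimate is in hand, matching the arithmetic constant $D$ and converting the resulting integral to the $(1-u)^{3}$ kernel are routine manipulations.
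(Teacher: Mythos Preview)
Your proposal is correct and follows essentially the same route as the paper, which simply says ``similar to the above lemma'': identify the summand as a multiplicative function, obtain the partial-sum asymptotic $\sum_{n\le t}f(n)=\tfrac{D}{24}(\log t)^4+O((\log t)^3)$ via Selberg's method (equivalently the Dirichlet-series/contour argument you sketch), and finish by Abel summation. One small imprecision: the local factor of your $H(s)$ is $1+O(p^{-s-2})+O(p^{-2s-2})$ rather than $1+O(p^{-2s-2})$, but this still gives absolute convergence for $\operatorname{Re}(s)>-\tfrac12$, so nothing in the argument is affected.
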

\begin{proof}
The proof is similar to the above lemma. 
\end{proof}

We need a lemma concerning the size of the function $F_{\tau_0}(n)$ on average.

\begin{lemma}\label{505}
Suppose $-1\leq\sigma\leq 0$. We have
\begin{equation*}
\sum_{n\leq y} \frac{d_k(n)F_{\tau_0}(n)}{n}\bigg(\frac{y}{n}\bigg)^{\sigma}\ll_k L^{k-1} \min\big\{|\sigma|^{-1},L\big\}.
\end{equation*}
\end{lemma}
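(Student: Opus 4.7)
The plan is to bound
\begin{equation*}
S \;=\; \sum_{n\leq y}\frac{d_k(n)F_{\tau_0}(n)}{n}\Big(\frac{y}{n}\Big)^\sigma
\end{equation*}
by decomposing the range $1\leq n\leq y$ dyadically: on each dyadic piece the weight $(y/n)^\sigma$ is essentially a constant $2^{j\sigma}$, while the associated divisor sum is $O(L^{k-1})$. Summing the resulting geometric series in two different ways will give the minimum of $L$ and $|\sigma|^{-1}$.

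The key analytic ingredient I would establish first is
\begin{equation*}
\sum_{n\leq t}d_k(n)F_{\tau_0}(n)\;\ll_k\; t(\log 2t)^{k-1}\qquad(1\leq t\leq y).
\end{equation*}
Since $d_k(n)F_{\tau_0}(n)$ is multiplicative with $d_k(p)F_{\tau_0}(p)=k(1+O(p^{-\tau_0}))$, this is immediate from Shiu's theorem. Equivalently, a prime-by-prime comparison of Euler factors gives the factorization $\sum_n d_k(n)F_{\tau_0}(n)n^{-s}=\zeta(s)^k G(s)$, where the correction $G(s)=\prod_p\bigl(1+O(p^{-\tau_0-s})\bigr)$ is analytic and bounded in $\mathrm{Re}(s)>1-\tau_0$, and a standard Perron contour shift then yields the same bound.

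Writing $I_j=(y\cdot 2^{-j-1},\,y\cdot 2^{-j}]$ for $0\leq j\leq J:=\lfloor\log_2 y\rfloor$, one has $(y/n)^\sigma\leq 2^{j\sigma}$ on $I_j$ (since $\sigma\leq 0$), and from the estimate above,
\begin{equation*}
\sum_{n\in I_j}\frac{d_k(n)F_{\tau_0}(n)}{n}\;\leq\;\frac{2^{j+1}}{y}\sum_{n\leq y/2^j}d_k(n)F_{\tau_0}(n)\;\ll_k\;L^{k-1}
\end{equation*}
uniformly in $j$ (the right-hand side is trivially $O(1)$ when $y/2^j=O(1)$). Therefore
\begin{equation*}
|S|\;\ll_k\;L^{k-1}\sum_{j=0}^J 2^{j\sigma}\;\ll\;L^{k-1}\min\big(L,\,|\sigma|^{-1}\big),
\end{equation*}
the last step combining the trivial bound $\sum_{j=0}^J 2^{j\sigma}\leq J+1\ll L$ with the geometric-series bound $\sum_{j=0}^{\infty}2^{j\sigma}=(1-2^\sigma)^{-1}\ll|\sigma|^{-1}$ for $\sigma\in[-1,0)$, using $1-2^\sigma\gg|\sigma|$ on that interval. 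The only substantive step is the Shiu-type mean value estimate; the rest is elementary bookkeeping of a geometric sum, and I do not foresee a serious obstacle.
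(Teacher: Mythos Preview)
Your argument is correct, but it proceeds differently from the paper. The paper does not redo the analytic work with $F_{\tau_0}$ included; instead it strips $F_{\tau_0}$ off by the pointwise inequality
\[
F_{\tau_0}(n)\ \leq\ \prod_{p\mid n}\bigl(1+Ap^{-\tau_0}\bigr)\ =\ \sum_{l\mid n} A^{w(l)}l^{-\tau_0},
\]
swaps the order of summation, and bounds the resulting inner sum by quoting the already--known special case (Lemma~4.6 of [\textbf{\ref{BCY}}])
\[
\sum_{n\leq y}\frac{d_k(n)}{n}\Big(\frac{y}{n}\Big)^\sigma\ \ll_k\ L^{k-1}\min\{|\sigma|^{-1},L\};
\]
the outer sum $\sum_l d_k(l)A^{w(l)}l^{-1-\tau_0}$ converges absolutely since $d_k(l)A^{w(l)}\ll l^{\tau_0/2}$. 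In contrast, you absorb $F_{\tau_0}$ into the multiplicative function from the outset, establish the Shiu/Perron mean-value bound $\sum_{n\leq t}d_k(n)F_{\tau_0}(n)\ll t(\log 2t)^{k-1}$ once, and then dispatch the weight $(y/n)^\sigma$ by a dyadic decomposition plus a geometric series. Your route is self-contained (it does not lean on the BCY lemma, and indeed reproves it in passing), while the paper's route is shorter precisely because it treats the $F_{\tau_0}$ perturbation and the $(y/n)^\sigma$ weight as two independent, already-solved problems.
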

\begin{proof}
We use Lemma 4.6 in [\textbf{\ref{BCY}}] that
\begin{equation*}
\sum_{n\leq y} \frac{d_k(n)}{n}\bigg(\frac{y}{n}\bigg)^{\sigma}\ll_k L^{k-1} \min\big\{|\sigma|^{-1},L\big\}.
\end{equation*}
We have
\begin{equation*}
F_{\tau_0}(n)\leq\prod_{p|n}\big(1+Ap^{-\tau_0}\big)=\sum_{l|n}l^{-\tau_0}A^{w(l)}
\end{equation*}
for some $A>0$, where $w(n)$ is the number of prime factors of $n$. Hence
\begin{eqnarray*}
\sum_{n\leq y} \frac{d_k(n)F_{\tau_0}(n)}{n}\bigg(\frac{y}{n}\bigg)^{\sigma}\ll\sum_{l\leq y}\frac{d_k(l)A^{w(l)}}{l^{1+\tau_0}}\sum_{n\leq y/l}\frac{d_k(n)}{n}\bigg(\frac{y/l}{n}\bigg)^{\sigma}\ll_k L^{k-1} \min\big\{|\sigma|^{-1},L\big\},
\end{eqnarray*}
since $d_k(l)A^{w(l)}\ll l^{\tau_0/2}$ for sufficiently large $l$.
\end{proof}

\begin{lemma}\label{600}
Let $F(n)=F(n,0)$, where
\begin{equation*}
F(n,\alpha)=\prod_{p|n}\bigg(1-\frac{1}{p^{1+\alpha}}\bigg).
\end{equation*}
For any $Q\in C^\infty([0,1])$ satisfying $Q(0)=Q'(0)=0$, there exist an absolute constant $\tau_0>0$ and some $\nu\asymp (\log\log y)^{-1}$ such that
\begin{eqnarray*}
\mathcal{A}_1(y,Q;a,b,\underline{\alpha})&=&\sum_{\substack{an\leq y\\(n,b)=1}}\frac{\mu(n)h(n)Q([an]_y)}{\varphi(n)n^{\alpha_1}}F(n,\alpha_2)F(n,\alpha_3)\\
&=&U_1V_1(b)\bigg(\frac{Q''([a]_y)}{(\log y)^2}+\frac{2\alpha_1 Q'([a]_y)}{\log y}+\alpha_{1}^{2}Q([a]_y)\bigg)\\
&&\qquad+O(F_{\tau_0}(b)L^{-3})+O_\varepsilon\bigg(F_{\tau_0}(b)\bigg(\frac{y}{a}\bigg)^{-\nu} L^{-2+\varepsilon}\bigg)
\end{eqnarray*}
uniformly for $\alpha_j\ll L^{-1}$, $1\leq j\leq 3$, where $U_1=U_1(0,\underline{0})$ and $V_1(n)=V_1(0,n,\underline{0})$, with
\begin{equation*}
U_1(s,\underline{\alpha})=\prod_{p}\bigg[1-\frac{2F(p,\alpha_2)F(p,\alpha_3)}{\varphi(p)p^{s+\alpha_1}}\bigg]\bigg(1-\frac{1}{p^{1+s+\alpha_1}}\bigg)^{-2}
\end{equation*}
and
\begin{equation*}
V_1(s,n,\underline{\alpha})=\prod_{p|n}\bigg[1-\frac{2F(p,\alpha_2)F(p,\alpha_3)}{\varphi(p)p^{s+\alpha_1}}\bigg]^{-1}.
\end{equation*}
\end{lemma}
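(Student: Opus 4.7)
My plan is to apply a Mellin--Bromwich representation to the polynomial cutoff $Q$ and then to use an Euler-product factorization of the resulting Dirichlet series. Writing $Q(u)=\sum_{j\ge 2}c_j u^j$ (which is allowed since $Q(0)=Q'(0)=0$) and substituting $s=w\log y$ into the Bromwich identity $u^j=(j!/2\pi i)\int_{(c)} e^{su}s^{-j-1}ds$ (valid for $u>0$, with the integral vanishing for $u<0$), I arrive at the representation
\begin{equation*}
Q([an]_y) = \frac{1}{2\pi i}\int_{(c)} G(w)\left(\frac{y}{an}\right)^{w}dw\quad\textrm{whenever }an<y,
\end{equation*}
with $G(w) := \sum_{j\ge 2}c_j j!/((\log y)^{j} w^{j+1})$, the right-hand side vanishing automatically when $an>y$. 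Since the sum in $\mathcal{A}_1$ is restricted to $an\le y$, interchanging sum and integral (valid for $c>\varepsilon$ by absolute convergence of the Dirichlet series) gives
\begin{equation*}
\mathcal{A}_1 = \frac{1}{2\pi i}\int_{(c)} G(w)\left(\frac{y}{a}\right)^{w} D(w)\,dw,
\end{equation*}
where $D(w):=\sum_{(n,b)=1}\mu(n)h(n)F(n,\alpha_2)F(n,\alpha_3)/(\varphi(n)n^{w+\alpha_1})$. The point of $h$ being supported on squarefree integers with $h(p)=2$ is precisely that the Euler product of $D$ collapses cleanly; comparing with $\zeta(1+w+\alpha_1)^{-2}=\prod_p(1-p^{-1-w-\alpha_1})^{2}$ then yields
\begin{equation*}
D(w) = \frac{U_1(w,\underline{\alpha})\,V_1(w,b,\underline{\alpha})}{\zeta(1+w+\alpha_1)^{2}}.
\end{equation*}

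The next step is to deform the contour to $\mathrm{Re}\,w=-\nu$ with $\nu\asymp(\log\log y)^{-1}$ small enough to stay inside the classical Vinogradov--Korobov zero-free region for $\zeta$ throughout $|\mathrm{Im}\,w|\le y$. The only pole crossed is at $w=0$, coming from $G$; the factor $\zeta(1+w+\alpha_1)^{-2}$ contributes only a \emph{zero} of order two at $w=-\alpha_1$, so $D$ is analytic across the relevant strip. Writing $\zeta(1+w+\alpha_1)^{-2}=(w+\alpha_1)^{2}\phi(w+\alpha_1)$ with $\phi$ analytic near $0$ and $\phi(0)=1$, an application of Leibniz gives
\begin{equation*}
\mathrm{Res}_{w=0}\bigl[G(w)(y/a)^{w}D(w)\bigr] = \sum_{j\ge 2}\frac{c_j}{(\log y)^{j}}\sum_{k=0}^{j}\binom{j}{k}([a]_y\log y)^{j-k}\,X^{(k)}(0),
\end{equation*}
where $X(w):=(w+\alpha_1)^{2}U_1(w,\underline{\alpha})V_1(w,b,\underline{\alpha})\phi(w+\alpha_1)$. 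A short computation shows $X(0)\sim\alpha_1^{2}U_1V_1(b)$, $X'(0)\sim 2\alpha_1 U_1V_1(b)$, and $X''(0)\sim 2U_1V_1(b)$ to leading order, while the contributions with $k\ge 3$ each carry an extra factor of $(\log y)^{-k}$ and are therefore absorbed into the $O(F_{\tau_0}(b)(\log y)^{-3})$ error term, together with the $\underline{\alpha}$-Taylor corrections of the arithmetic factors $U_1$, $V_1$, $\phi$. Re-summing in $j$ via $\sum c_j[a]_y^{j}=Q([a]_y)$, $\sum c_j j[a]_y^{j-1}=Q'([a]_y)$, and $\sum c_j j(j-1)[a]_y^{j-2}=Q''([a]_y)$ then produces precisely the stated main term.

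It remains to bound the integral on the shifted contour $\mathrm{Re}\,w=-\nu$. There $|(y/a)^{w}|=(y/a)^{-\nu}$ and $G(w)$ decays at least like $|w|^{-3}$ at infinity; classical estimates in the zero-free region give $|\zeta(1+w+\alpha_1)^{-2}|\ll(\log y)^{2-\varepsilon}$; the factor $U_1(w,\underline{\alpha})$ is uniformly $\ll 1$, and every Euler factor of $V_1(w,b,\underline{\alpha})$ over $p\mid b$ is dominated by the corresponding factor of $F_{\tau_0}(b)$ once the finitely many small primes are absorbed into a constant. Combining these bounds produces the $O_\varepsilon(F_{\tau_0}(b)(y/a)^{-\nu}(\log y)^{-2+\varepsilon})$ contribution. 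The main technical obstacle will be to track the $b$-dependence cleanly on the shifted contour when $b$ has many small prime factors, and simultaneously to balance the choice of $\nu$ against the shape of the Vinogradov--Korobov zero-free region so that both stated error terms emerge with the advertised exponents.
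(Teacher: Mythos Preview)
Your approach is essentially identical to the paper's: write $Q$ as a polynomial with no constant or linear term, use the Mellin representation to convert $\mathcal{A}_1$ into a contour integral, factor the resulting Dirichlet series as $U_1(s,\underline{\alpha})V_1(s,b,\underline{\alpha})\zeta(1+s+\alpha_1)^{-2}$, shift the contour leftward past $s=0$, and read off the main term from the residue there. The residue bookkeeping and the tracking of the $F_{\tau_0}(b)$ dependence via the Euler product for $V_1$ are also the same.

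There is, however, one step that fails as written. You claim that $\nu\asymp(\log\log y)^{-1}$ keeps the line $\mathrm{Re}\,w=-\nu$ inside the Vinogradov--Korobov zero-free region throughout $|\mathrm{Im}\,w|\le y$. This is false: at height $|t|\sim y$ that region extends only a distance $c(\log y)^{-2/3}(\log\log y)^{-1/3}$ to the left of $\sigma=1$, which is much smaller than $(\log\log y)^{-1}$. With truncation at height $y$ you cannot obtain the stated $\nu$, and you also do not say what happens to the tails beyond your truncation height. The paper fixes both issues by truncating far lower: it keeps the contour on the imaginary axis for $|t|\ge Y$ with $Y\asymp L$, bounds those tails directly using the $s^{-j-1}$ decay (crucially $j\ge2$), and only shifts to $\mathrm{Re}\,s=-c/\log Y\asymp-(\log\log y)^{-1}$ on the bounded segment $|t|\le Y$. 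On that short segment the classical zero-free region $\sigma>1-c/\log(|t|+2)$ and the bound $\zeta(\sigma+it)^{-1}\ll\log(|t|+2)$ suffice; Vinogradov--Korobov is not needed. With this adjustment to the contour your argument goes through and coincides with the paper's.
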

\begin{proof}
This is essentially a variation of the prime number theorem. 

It suffices to consider $Q(x)=\sum_{j\geq 2}a_jx^j$. We have
\begin{equation*}
\mathcal{A}_1(y,Q;a,b,\underline{\alpha})=\sum_{j\geq 2}\frac{a_jj!}{(\log y)^j}\sum_{(n,b)=1}\frac{1}{2\pi i}\int_{(2)}\bigg(\frac{y}{a}\bigg)^s\frac{\mu(n)h(n)}{\varphi(n)n^{s+\alpha_1}}F(n,\alpha_2)F(n,\alpha_3)\frac{ds}{s^{j+1}}.
\end{equation*}
The sum over $n$ converges absolutely. Hence
\begin{equation*}
\mathcal{A}_1(y,Q;a,b,\underline{\alpha})=\sum_{j\geq 2}\frac{a_jj!}{(\log y)^j}\frac{1}{2\pi i}\int_{(2)}\bigg(\frac{y}{a}\bigg)^s\sum_{(n,b)=1}\frac{\mu(n)h(n)}{\varphi(n)n^{s+\alpha_1}}F(n,\alpha_2)F(n,\alpha_3)\frac{ds}{s^{j+1}}.
\end{equation*}
The sum in the integrand equals
\begin{equation*}
\prod_{p\nmid b}\bigg(1-\frac{2F(p,\alpha_2)F(p,\alpha_3)}{\varphi(p)p^{s+\alpha_1}}\bigg)=\frac{U_1(s,\underline{\alpha})V_1(s,b,\underline{\alpha})}{\zeta(1+s+\alpha_1)^2}.
\end{equation*}

Let $Y = o(T)$ be a large parameter to be chosen later.  By Cauchy's theorem, $\mathcal{A}_1(y,Q;a,b,\underline{\alpha})$ is equal to the residue at $s=0$ plus integrals over the line segments $\mathcal{C}_1=\{s=it,t\in\mathbb{R},|t|\geq Y\}$, $\mathcal{C}_{2}=\{s=\sigma\pm iY,-\frac{c}{\log{Y}}\leq\sigma\leq 0\}$, and $\mathcal{C}_3=\{s=-\frac{c}{\log{Y}}+it,|t|\leq Y\}$, where $c$ is some fixed positive constant such that $\zeta(1+ s+\alpha_1)$ has no zeros in the region on the right hand side of the contour determined by the $\mathcal{C}_j$'s. Furthermore, we require that for such $c$ we have $1/\zeta(\sigma + it) \ll \log(2 + |t|)$ in this region [see \textbf{\ref{T}}; Theorem 3.11].  Then the integral over $\mathcal{C}_1$ is 
\begin{equation*}
\ll F_{\tau_0}(b)L^{-j}(\log{Y})^2/Y^{j} \ll_\varepsilon F_{\tau_0}(b)L^{-2}Y^{-2+\varepsilon},
\end{equation*} 
since $j\geq2$. The integral over $\mathcal{C}_2$ is 
\begin{equation*}
\ll F_{\tau_0}(b)L^{-j}(\log{Y})/Y^{j+1} \ll_\varepsilon F_{\tau_0}(b)L^{-2}Y^{-3+\varepsilon}.
\end{equation*}  
Finally, the contribution from $\mathcal{C}_3$ is 
\begin{equation*}
\ll F_{\tau_0}(b)L^{-j}(\log Y)^j \bigg(\frac{y}{a}\bigg)^{-c/\log Y}\ll_\varepsilon F_{\tau_0}(b)\bigg(\frac{y}{a}\bigg)^{-c/\log Y}L^{-2+\varepsilon}.
\end{equation*}  
Choosing $Y \asymp L$ gives an error so far of size $O_\varepsilon\big(F_{\tau_0}(b)(y/a)^{-\nu} L^{-2+\varepsilon}\big) + O_\varepsilon(F_{\tau_0}(b)L^{-4+\varepsilon})$.

For the residue at $s=0$, we write this as
\begin{equation*}
\sum_{j\geq 2}\frac{a_jj!}{(\log y)^j}\frac{1}{2 \pi i} \oint \bigg(\frac{y}{a}\bigg)^s \frac{U_1(s,\underline{\alpha})V_1(s,b,\underline{\alpha})}{\zeta(1+s+\alpha_1)^2}  \frac{ds}{s^{j+1}},
\end{equation*}
where the contour is a circle of radius $\asymp L^{-1}$ around the origin. This integral is trivially bounded by $O(L^{-2})$ so that taking the first term in the Taylor series of $\zeta(1+s+\alpha_1)$ finishes the proof.
\end{proof}

\begin{lemma}\label{601}
For any $Q,R\in C^\infty([0,1])$, there exists an absolute constant $\tau_0>0$ such that
\begin{eqnarray*}
&&\mathcal{A}_2(y,Q,R;a_1,a_2,\alpha_1)=\sum_{\substack{a_1a_2l\leq y\\a_1m\leq y}}\frac{h(a_1a_2l)h(a_1m)Q([a_1m]_y)R([a_1a_2l]_y)V_1(a_1a_2lm)}{lm^{1+\alpha_1}}\\
&&\qquad\qquad=U_2(\log y)^4h(a_1a_2)h(a_1)V_1(a_1a_2)V_2(a_1)V_3(a_2)V_4(a_1a_2)\\
&&\qquad\qquad\qquad\qquad \int_{0}^{[a_1]_y}y^{-\alpha_1t}tQ([a_1]_y-t)R_1([a_1a_2]_y)dt+O(d_4(a_1)d(a_2)F_{\tau_0}(a_1a_2)L^3)
\end{eqnarray*}
uniformly for $\alpha_1\ll L^{-1}$, where 
\begin{eqnarray*}
U_2=\prod_{p}\bigg(1+\frac{2V_1(p)}{p}\bigg)\bigg[1+\frac{2V_1(p)}{p}\bigg(1+\frac{2}{p}\bigg)\bigg(1+\frac{2V_1(p)}{p}\bigg)^{-1}\bigg]\bigg(1-\frac{1}{p}\bigg)^4,
\end{eqnarray*}
\begin{equation*}
V_2(n)=\prod_{p|n}\bigg(1+\frac{2V_1(p)}{p}\bigg)^{-1},\qquad
V_3(n)=\prod_{p|n}\bigg(1+\frac{2}{p}\bigg)\bigg(1+\frac{2V_1(p)}{p}\bigg)^{-1}
\end{equation*}
and
\begin{eqnarray*}
V_4(n)=\prod_{p|n}\bigg[1+\frac{2V_1(p)}{p}\bigg(1+\frac{2}{p}\bigg)\bigg(1+\frac{2V_1(p)}{p}\bigg)^{-1}\bigg]^{-1}.
\end{eqnarray*}
\end{lemma}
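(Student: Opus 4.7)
The proof extends the Selberg-style approach of Lemma~\ref{504} to a double sum, the main new difficulty being the factor $V_1(a_1 a_2 l m)$ coupling $l$ and $m$. My plan is to first reduce to a cleanly structured double sum, then evaluate it via a double contour integral as in the proof of Lemma~\ref{600}, and finally identify the arithmetic constants.

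Since $h$ is supported on square-free integers, the sum vanishes unless $a_1, a_2$ are square-free and coprime; assuming this, the nonzero summands have $l, m$ square-free with $(l, a_1 a_2) = 1$ and $(m, a_1) = 1$, and we may factor $h(a_1 a_2 l) = h(a_1 a_2) h(l)$ and $h(a_1 m) = h(a_1) h(m)$. For the coupling factor $V_1(a_1 a_2 l m)$, every prime dividing $a_1 a_2$ contributes a uniform $V_1(p)$ regardless of $l, m$, while primes not dividing $a_1 a_2$ contribute $V_1(p)$ only when they actually divide $l$ or $m$. This motivates the Dirichlet series
\begin{equation*}
D(s, w) = \sum_{\substack{l, m\ \textrm{sqfree} \\ (l, a_1 a_2) = 1,\ (m, a_1) = 1}} \frac{h(l)\, h(m)\, V_1(a_1 a_2 l m)}{l^w\, m^s},
\end{equation*}
whose Euler product has local factor $V_1(p)$ at primes $p \mid a_1$, factor $V_1(p)(1 + 2 p^{-s})$ at primes $p \mid a_2$, and factor $1 + 2 V_1(p)(p^{-s} + p^{-w}) + 4 V_1(p) p^{-s-w}$ at primes $p \nmid a_1 a_2$. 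Extracting $\zeta(s)^2 \zeta(w)^2$ from the generic factor gives $D(s, w) = \zeta(s)^2 \zeta(w)^2\, G(s, w; a_1, a_2)$, where $G$ is holomorphic and non-vanishing near $s = w = 1$; a direct calculation matching Euler factors shows that $G(1, 1; a_1, a_2) = U_2\, V_1(a_1 a_2)\, V_2(a_1)\, V_3(a_2)\, V_4(a_1 a_2)$.

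Following the proof of Lemma~\ref{600}, I Taylor-expand $Q(x) = \sum_i q_i x^i$ and $R(x) = \sum_j r_j x^j$ and represent each $[cn]_y^k$ by a Mellin--Perron contour integral, converting $\mathcal{A}_2$ into a double contour integral of $h(a_1) h(a_1 a_2)\, D(1 + s + \alpha_1, 1 + w)\, (y/a_1)^s (y/(a_1 a_2))^w\, s^{-i-1} w^{-j-1}$ along $\mathrm{Re}(s) = \mathrm{Re}(w) = 2$. I then push each contour to the left past the double pole of $\zeta^2$ at the origin. The $w$-residue yields $(\log y)^2 R_1([a_1 a_2]_y)$, independent of $\alpha_1$; the $s$-residue, after the $\alpha_1$-shift, yields $(\log y)^2 \int_{0}^{[a_1]_y} y^{-\alpha_1 t}\, t\, Q([a_1]_y - t)\, dt$. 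Combining these with $h(a_1) h(a_1 a_2) G(1, 1; a_1, a_2)$ reproduces the main term.

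The error comes from pushing each contour to $\mathrm{Re}(s), \mathrm{Re}(w) \asymp -1/\log L$ inside the classical zero-free region of $\zeta$ and estimating the side integrals, exactly as in the proof of Lemma~\ref{600}. The resulting bound is $O(d_4(a_1) d(a_2) F_{\tau_0}(a_1 a_2) L^3)$, where the factor $d_4(a_1)$ tracks the four Euler-factor manipulations at primes $p \mid a_1$ (arising once each from $h(a_1)$, $h(a_1 a_2)$, $V_2(a_1)$, and $V_4(a_1 a_2)$), and $d(a_2) F_{\tau_0}(a_1 a_2)$ arises analogously from primes of $a_2$ together with the generic correction factors. The main obstacle will be the meticulous identification of the Euler-factor constants $U_2, V_2, V_3, V_4$; once the local analysis is in place and the contour argument is run in parallel to Lemma~\ref{600}, the proof concludes.
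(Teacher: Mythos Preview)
Your approach is correct but takes a different route from the paper. The paper's proof is a two-line sketch: it iterates the Selberg partial-summation method of Lemma~\ref{504}, first executing the $m$-sum (absorbing the $l$-dependent piece of $V_1(a_1a_2lm)$ into a modified multiplicative weight on $m$) and then the $l$-sum, each step producing a factor $(\log y)^2$ and an error one logarithm smaller. Your double Dirichlet series $D(s,w)$ packages the $V_1$-coupling more transparently through the joint Euler product and gives the constants $U_2,V_2,V_3,V_4$ in one stroke, at the price of a two-variable contour argument; the paper's iterative route is more elementary but must track the $l$-dependence of the $m$-sum's leading constant through the second summation.

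A few points to tighten in your write-up. The zero-free region is irrelevant here: unlike Lemma~\ref{600} there is no $1/\zeta$, only $\zeta(1+s+\alpha_1)^2\zeta(1+w)^2$, so the classical bound $\zeta(\sigma+it)\ll\log(2+|t|)$ near the $1$-line is all you need on the shifted contours. In the $s$-variable there are two nearby singularities, at $s=0$ from $s^{-i-1}$ and at $s=-\alpha_1$ from $\zeta^2$; the combined residue does yield $\int_0^{[a_1]_y}y^{-\alpha_1 t}\,t\,Q([a_1]_y-t)\,dt$, but you should say you enclose both with a circle of radius $\asymp L^{-1}$ rather than speak of ``the double pole at the origin.'' Your Mellin--Perron template borrowed from Lemma~\ref{600} relied on $Q(0)=Q'(0)=0$ so that every $s^{-j-1}$ has $j\ge2$ and the vertical integrals converge absolutely; Lemma~\ref{601} carries no such hypothesis on $Q,R$, so the low-degree Taylor terms need a truncated Perron or a separate elementary estimate. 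Finally, the factor $d_4(a_1)d(a_2)$ in the error is simply the bound $h(a_1)h(a_1a_2)=d(a_1)^2d(a_2)$ for square-free coprime $a_1,a_2$, not a count of Euler-factor manipulations.
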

\begin{proof}
The proof uses Selberg's method [\textbf{\ref{S}}] similarly to Lemma \ref{504}. One first executes the sum over $m$, and then the sum over $l$.
\end{proof}

\begin{lemma}\label{602}
For any $Q,R\in C^\infty([0,1])$, we have
\begin{eqnarray*}
&&\emph{(i)}\quad\sum_{l_1l_2\leq y}\frac{h(l_1l_2)h(l_1)Q([l_1]_y)R([l_1l_2]_y)}{l_{1}l_{2}^{1+\alpha_1}} F(l_1,\alpha_2)F(l_1l_2,\alpha_3)V_1(l_1l_2)V_2(l_1)V_3(l_2)V_4(l_1l_2)\\
&&\qquad\qquad=\frac{W(\log y)^6}{6}\int_{0}^{1}\int_{0}^{x}(1-x)^3y^{-\alpha_1t_1}t_1Q(x)R(x-t_1)dt_1dx+O(L^5),\\
&&\emph{(ii)}\quad\sum_{pl_1l_2\leq y}\frac{\log p}{(p^{1+\alpha_4}-1)p^{\alpha_5}}\frac{h(pl_1l_2)h(l_1)Q([l_1]_y)R([pl_1l_2]_y)}{l_{1}l_{2}^{1+\alpha_1}}\\
&&\qquad\qquad\qquad\qquad F(pl_1,\alpha_2)F(pl_1l_2,\alpha_3)V_1(pl_1l_2)V_2(l_1)V_3(pl_2)V_4(pl_1l_2)\\
&&\qquad\qquad=\frac{W(\log y)^7}{3}\int_{0}^{1}\int_{\substack{t_j\geq0\\t_1+t_2\leq x}}(1-x)^3y^{-\alpha_1t_1-(\alpha_4+\alpha_5)t_2}t_1Q(x)R(x-t_1-t_2)dt_1dt_2dx\\
&&\qquad\qquad\qquad\qquad+O(L^6)
\end{eqnarray*}
uniformly for $\alpha_j\ll L^{-1}$, $1\leq j\leq5$, where 
\begin{eqnarray*}
W=\prod_{p}\bigg(1+\frac{2F(p)V_1(p)V_3(p)V_4(p)}{p}+\frac{4F(p)^2V_1(p)V_2(p)V_4(p)}{p}\bigg)\bigg(1-\frac{1}{p}\bigg)^6.
\end{eqnarray*}
\end{lemma}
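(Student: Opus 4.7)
The plan is to evaluate both sums by Selberg's method, following the template of Lemma \ref{504} and Lemma \ref{601}. The idea is to view each sum as a multiple Dirichlet series, extract residues at the relevant poles, and identify the arithmetical constant through an Euler-product computation.

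For part (i), I would first fix $l_1$ and execute the inner sum over $l_2\leq y/l_1$, whose weight $h(l_1 l_2) l_2^{-1-\alpha_1} F(l_1 l_2,\alpha_3) V_1(l_1 l_2) V_3(l_2) V_4(l_1 l_2) R([l_1 l_2]_y)$ is a squarefree-restricted divisor-type sum. Applying Lemma \ref{300} to isolate the $l_1$-dependent prime contributions, the Dirichlet series in $l_2$ factors as a local Euler product times $\zeta(1+s+\alpha_1)^2$; shifting past the double pole at $s=0$ and implementing the cut-off $l_2\leq y/l_1$ produces an inner main term of shape $(\log y)^2 h(l_1)/l_1$ times an integral in $R$ carrying the exponential factor $y^{-\alpha_1 t_1}$ with $t_1=\log l_2/\log y$. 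The outer sum against $h(l_1) Q([l_1]_y)/l_1$ then falls under Lemma \ref{504} and contributes another $\zeta^2$-type pole together with the weight $(1-x)^3 Q(x)$, where $x=[l_1]_y$. Combining the two Selberg steps with the integrations induced by the cut-offs delivers the stated $W(\log y)^6/6$ and the double integral $\int_0^1\int_0^x (1-x)^3 y^{-\alpha_1 t_1} t_1 Q(x) R(x-t_1) dt_1 dx$.

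For part (ii), the weight $\log p/((p^{1+\alpha_4}-1) p^{\alpha_5})$ plays the role of a logarithmic derivative of $\zeta$ evaluated near $1+\alpha_4+\alpha_5$. I would introduce an additional contour variable $s_3$ for the prime sum, yielding one further pole of $\zeta(1+s_3+\alpha_4+\alpha_5)$, hence one more power of $\log y$ and one more integration variable $t_2=\log p/\log y$; the range $t_1+t_2\leq x$ reflects the cut-off $p l_2\leq y/l_1$. All other multiplicative data are inherited from (i), since $V_1,V_2,V_3,V_4$ and $F(\cdot,\alpha_j)$ are multiplicative in their first argument, so the local factors at primes distinct from $p$ are unchanged.

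The main obstacle is the careful Euler-product bookkeeping that produces the constant $W$. At each prime $p$ one enumerates the three surviving cases, namely $p\nmid l_1 l_2$, $p \mid l_1$ only, and $p \mid l_2$ only, since the case $p \mid l_1$ and $p \mid l_2$ forces $l_1 l_2$ to be non-squarefree and hence $h(l_1 l_2)=0$. Collecting the associated products of $h$, $V_i$, and $F$ factors, dividing through by the six copies of $(1-1/p)^{-1}$ extracted from the residue, and specialising $\underline{\alpha}=\underline{0}$, one verifies that the remaining local factor matches the Euler factor of $W$ appearing in the statement. Once this combinatorial identification is in place, the residue extraction and the error bounds $O(L^5)$ in (i), respectively $O(L^6)$ in (ii), follow exactly as in the proofs of Lemma \ref{504} and Lemma \ref{601}.
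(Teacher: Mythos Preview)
Your overall strategy for part (i) --- first execute the $l_2$-sum by Selberg's method, then the $l_1$-sum --- is exactly what the paper does. However, your description contains a slip: after the inner sum over $l_2$, the surviving $l_1$-weight is not $h(l_1)/l_1$ but $h(l_1)^2/l_1$ (times multiplicative factors), because the original summand carries $h(l_1l_2)h(l_1)$ and on the squarefree support this factorises as $h(l_1)^2h(l_2)$. Consequently the outer sum is governed by a $\zeta^4$-type singularity, not a $\zeta^2$-type one; this is what produces the factor $(1-x)^3$ and the total power $(\log y)^6$ you correctly quote. Lemma~\ref{504} does not literally apply to the outer sum, but its proof method (Selberg) does; the paper makes this explicit by recording the intermediate asymptotic $\sum_{l_1\leq t}h(l_1)^2l_1^{-1}(\cdots)=C'(\log t)^4/24+O(L^3)$ before integrating by parts.

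For part (ii) your contour approach would work, but the paper takes a shorter route: it observes that higher prime powers in $\log p/(p^{1+\alpha_4}-1)$ contribute $O(L^6)$, that the coprimality constraint $(p,l_1l_2)=1$ coming from the squarefreeness of $h$ may be added or removed at cost $O(L^6)$, then pulls the $p$-sum to the outside and applies part~(i) to the inner sum in $l_1,l_2$ with $y$ replaced by $y/p$. A final partial summation using Mertens' estimate $\sum_{p\leq t}(\log p)/p=\log t+O(1)$ yields the extra integration variable $t_2$ and the additional power of $\log y$. This avoids a separate Euler-product verification for~(ii), since the constant $W$ is inherited directly from~(i).
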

\begin{proof}
We consider the first statement. We start with the sum over $l_2$ on the left hand side of (i), which is
\begin{equation*}
\sum_{\substack{l_2\leq y/l_1\\(l_2,l_1)=1}}\frac{h(l_2)R([l_1l_2]_y)}{l_{2}^{1+\alpha_1}}F(l_2,\alpha_3)V_1(l_2)V_3(l_2)V_4(l_2).
\end{equation*}
As in how we prove Lemma \ref{504}, this equals
\begin{eqnarray}\label{701}
\prod_{p}\bigg\{W_1(p)^{-1}\bigg(1-\frac{1}{p}\bigg)^2\bigg\}(\log y)^2W_1(l_1)\int_{0}^{[l_1]_y}y^{-\alpha_1t_1}t_1R([l_1]_y-t_1)dt_1+O(L),
\end{eqnarray}
where
\begin{equation*}
W_1(n)=\prod_{p|n}\bigg(1+\frac{2F(p)V_1(p)V_3(p)V_4(p)}{p}\bigg)^{-1}.
\end{equation*}
Hence the required expression is
\begin{eqnarray}\label{700}
&&\prod_{p}\bigg\{W_1(p)^{-1}\bigg(1-\frac{1}{p}\bigg)^2\bigg\}(\log y)^2\sum_{l_1\leq y}\frac{h(l_1)^2Q([l_1]_y)}{l_{1}}\\
&&\qquad F(l_1,\alpha_2)F(l_1,\alpha_3)V_1(l_1)V_2(l_1)V_4(l_1)W_1(l_1)\int_{0}^{[l_1]_y}y^{-\alpha_1t_1}t_1R([l_1]_y-t_1)dt_1+O(L^5).\nonumber
\end{eqnarray}
Using Selberg's method [\textbf{\ref{S}}] again we have
\begin{eqnarray*}
&&\sum_{l_1\leq t}\frac{h(l_1)^2}{l_{1}}F(l_1,\alpha_2)F(l_1,\alpha_3)V_1(l_1)V_2(l_1)V_4(l_1)W_1(l_1)\\
&&\qquad\qquad=\prod_p\bigg\{W_2(p)^{-1}\bigg(1-\frac{1}{p}\bigg)^4\bigg\}\frac{(\log t)^4}{24}+O(L^3)
\end{eqnarray*}
for any $t\leq T$, where
\begin{equation*}
W_2(n)=\prod_{p|n}\bigg\{1+\frac{4F(p)^2V_1(p)V_2(p)V_4(p)W_1(p)}{p}\bigg\}^{-1}.
\end{equation*} 
Partial summation then implies that \eqref{700} is equal to
\begin{eqnarray*}
\prod_p\bigg\{W_1(p)^{-1}W_2(p)^{-1}\bigg(1-\frac{1}{p}\bigg)^6\bigg\}\frac{(\log y)^4}{6}\int_{0}^{1}\int_{0}^{x}(1-x)^3y^{-\alpha_1t_1}t_1Q(x)R(x-t_1)dt_1dx+O(L^5).
\end{eqnarray*}
It is easy to check that the arithmetical factor is $W$, and we obtain the first statement.

For the second statement, we first notice that the contribution of the terms involving $p^{-s}$ with $\textrm{Re}(s)>1$ is $O(L^6)$. Hence the left hand side of (ii) is
\begin{eqnarray*}
&&2\sum_{l_1l_2\leq y}\frac{h(l_1l_2)h(l_1)Q([l_1]_y)}{l_{1}l_{2}^{1+\alpha_1}}F(l_1,\alpha_2)F(l_1l_2,\alpha_3)V_1(l_1l_2)V_2(l_1)V_3(l_2)V_4(l_1l_2)\nonumber\\
&&\qquad\sum_{\substack{p\leq y/l_1l_2\\(p,l_1l_2)=1}}\frac{(\log p)R([pl_1l_2]_y)}{p^{1+\alpha_4+\alpha_5}}+O(L^6).
\end{eqnarray*} 
The same argument shows that we can include the terms $p|l_1l_2$ in the innermost sum with an admissible error $O(L^6)$, so that the above expression is equal to
\begin{eqnarray*}
&&2\sum_{p\leq y}\frac{\log p}{p^{1+\alpha_4+\alpha_5}}\sum_{l_1l_2\leq y/p}\frac{h(l_1l_2)h(l_1)Q([l_1]_y)R([pl_1l_2]_y)}{l_{1}l_{2}^{1+\alpha_1}}\\
&&\qquad\qquad\qquad\qquad\qquad\qquad F(l_1,\alpha_2)F(l_1l_2,\alpha_3)V_1(l_1l_2)V_2(l_1)V_3(l_2)V_4(l_1l_2)+O(L^6).
\end{eqnarray*}
We have
\begin{equation*}
\sum_{p\leq t}\frac{\log p}{p}=\log t+O(1)
\end{equation*}
for any $t\leq T$. The result follows by using Part (i) and partial summation.
\end{proof}

\section{Proof of Lemma 1.1}

To evaluate $\mathcal{M}_1(H,T)$, we first appeal to Theorem 1 of [\textbf{\ref{BCH}}] and obtain
\begin{eqnarray*}
\mathcal{M}_1(H,T)&=&T\sum_{m,n\leq y}\frac{h(m)h(n)P([m]_y)P([n]_y)(m,n)}{mn}\bigg(\log\frac{T(m,n)^2}{2\pi mn}+2\gamma-1\bigg)\\
&&\qquad+O_B(TL^{-B})+O_\varepsilon(y^2T^\varepsilon)
\end{eqnarray*}
for any $B>0$, where $\gamma$ is the Euler constant. Using the M\"obius inversion formula 
\begin{equation*}
f\big((m,n)\big)=\sum_{\substack{l|m\\l|n}}\sum_{d|l}\mu(d)f\bigg(\frac{l}{d}\bigg),
\end{equation*}
we can write the above as
\begin{eqnarray*}
T\sum_{l\leq y}\sum_{d|l}\frac{\mu(d)}{dl}\sum_{m,n\leq y/l}\frac{h(lm)h(ln)P([lm]_y)P([ln]_y)}{mn}\bigg(\log\frac{T}{2\pi d^2mn}+2\gamma-1\bigg)+O_B(TL^{-B}).
\end{eqnarray*}
We next replace the term in the bracket by $\log \frac{T}{2\pi mn}$. This produces an error of size
\begin{eqnarray*}
\ll T\sum_{l\leq y}\frac{d(l)^2}{l}\bigg(\sum_{n\leq y/l}\frac{d(n)}{n}\bigg)^2\sum_{d|l}\frac{\log d}{d}\ll TL^8.
\end{eqnarray*}
Hence
\begin{eqnarray*}
\mathcal{M}_1(H,T)&=&T\sum_{l\leq y}\frac{\varphi(l)}{l^2}\sum_{m,n\leq y/l}\frac{h(lm)h(ln)P([lm]_y)P([ln]_y)}{mn}\big(L-\log m-\log n\big)+O(TL^8)\\
&=&TL\sum_{l\leq y}\frac{\varphi(l)}{l^2}\bigg(\sum_{n\leq y/l}\frac{h(ln)P([ln]_y)}{n}\bigg)^2\\
&&\qquad-2T\sum_{l\leq y}\frac{\varphi(l)}{l^2}\sum_{m,n\leq y/l}\frac{h(lm)h(ln)P([lm]_y)P([ln]_y)\log n}{mn}+O(TL^8).
\end{eqnarray*}
The result follows by using Lemma \ref{504}, Lemma \ref{507} and Lemma \ref{505}. Here we use a fact which is easy to verify that $C^2D=A$.

\section{Proof of Lemma 1.2}

We denote $H(s)=\zeta(s)G(s)$, i.e.
\begin{equation*}
G(s)=\sum_{n\leq y}\frac{h(n)P([n]_y)}{n^s}.
\end{equation*}
By Cauchy's theorem we have
\begin{eqnarray*}
\sum_{0<\gamma\leq T}H(\rho+i\alpha)H(1-\rho-i\alpha)=\frac{1}{2\pi i}\int_{\mathcal{C}}\frac{\zeta'}{\zeta}(s)\zeta(s+i\alpha)\zeta(1-s-i\alpha)G(s+i\alpha)G(1-s-i\alpha)ds,
\end{eqnarray*}
where $\mathcal{C}$ is the positively oriented rectangle with vertices at $1-a+i$, $a+i$, $a+iT$ and $1-a+iT$. Here $a=1+L^{-1}$ and $T$ is chosen so that the distance from $T$ to the nearest $\gamma$ is $\gg L^{-1}$. It is standard that the contribution from the horizontal segments of the contour is $O_\varepsilon(yT^{1/2+\varepsilon}$).

We denote the contribution from the right edge by $\mathcal{N}_1$, where
\begin{equation}\label{100}
\mathcal{N}_1=\frac{1}{2\pi i}\int_{a+i}^{a+iT}\chi(1-s-i\alpha)\frac{\zeta'}{\zeta}(s)\zeta(s+i\alpha)^2G(s+i\alpha)G(1-s-i\alpha)ds.
\end{equation}
From the functional equation we have
\begin{equation*}
\frac{\zeta'}{\zeta}(1-s)=\frac{\chi'}{\chi}(1-s)-\frac{\zeta'}{\zeta}(s).
\end{equation*}
Hence the contribution from the left edge,  by substituting $s$ by $1-s$, is
\begin{eqnarray*}
&&\frac{1}{2\pi i}\int_{a-i}^{a-iT}\frac{\zeta'}{\zeta}(1-s)\zeta(1-s+i\alpha)\zeta(s-i\alpha)G(1-s+i\alpha)G(s-i\alpha)ds\nonumber\\
&=&\frac{1}{2\pi i}\int_{a-i}^{a-iT}\bigg(\frac{\chi'}{\chi}(1-s)-\frac{\zeta'}{\zeta}(s)\bigg)\zeta(1-s+i\alpha)\zeta(s-i\alpha)G(1-s+i\alpha)G(s-i\alpha)ds\nonumber\\
&=&-\overline{\mathcal{N}_2}+\overline{\mathcal{N}_1}+O_\varepsilon(yT^{1/2+\varepsilon}),
\end{eqnarray*}
where
\begin{equation}\label{506}
\mathcal{N}_2(\beta,\gamma)=\frac{1}{2\pi i}\int_{a+i}^{a+iT}\frac{\chi'}{\chi}(1-s)\zeta(1-s+i\alpha)\zeta(s-i\alpha)G(1-s+i\alpha)G(s-i\alpha)ds.
\end{equation}
Thus
\begin{equation}\label{806}
\sum_{0<\gamma\leq T}H(\rho+i\alpha)H(1-\rho-i\alpha)=2\textrm{Re}\big(\mathcal{N}_1\big)-\overline{\mathcal{N}_2}+O_\varepsilon(yT^{1/2+\varepsilon}).
\end{equation}

\subsection{Evaluate $\mathcal{N}_2$}

We move the line of integration in \eqref{506} to the $\tfrac{1}{2}$-line. As before, this produces an error of size $O_\varepsilon(yT^{1/2+\varepsilon})$. Hence we get
\begin{equation*}
\mathcal{N}_2=\frac{1}{2\pi}\int_{1-\alpha}^{T-\alpha}\frac{\chi'}{\chi}\big(\tfrac{1}{2}-it-i\alpha\big)\big|H(\tfrac{1}{2}+it)\big|^2dt+O_\varepsilon(yT^{1/2+\varepsilon}).
\end{equation*}

From Stirling's approximation we have
\begin{displaymath}
\frac{\chi'}{\chi}(\tfrac{1}{2}-it)=-\log\frac{t}{2\pi}+O(t^{-1})\qquad(t\geq 1).
\end{displaymath}
Combining this with Lemma 1.1 and integration by parts, we easily obtain
\begin{equation}\label{805}
\mathcal{N}_2=-\frac{ATL(\log y)^9}{12\pi}\int_{0}^{1}(1-x)^3\bigg(\vartheta^{-1}P_1(x)^2-2P_1(x)P_2(x)\bigg)dx+O(TL^9).
\end{equation}

\subsection{Evaluate $\mathcal{N}_1$}

It is easier to start with a more general sum
\begin{eqnarray*}
\mathcal{N}_1(\beta,\gamma)&=&\frac{1}{2\pi i}\int_{a+i(1+\alpha)}^{a+i(T+\alpha)}\chi(1-s)\bigg(\frac{\zeta'}{\zeta}(s+\beta)\zeta(s+\gamma)\zeta(s)\sum_{m\leq y}\frac{h(m)P([m]_y)}{m^{s}}\bigg)\\
&&\qquad\qquad\qquad\qquad\bigg(\sum_{n\leq y}\frac{h(n)P([n]_y)}{n^{1-s}}\bigg)ds,
\end{eqnarray*}
so that $\mathcal{N}_1=\mathcal{N}_1(-i\alpha,0)$. From Lemma \ref{501}, we obtain 
\begin{equation*}
\mathcal{N}_1(\beta,\gamma)=\sum_{n\leq y}\frac{h(n)P([n]_y)}{n}\sum_{m\leq nT/2\pi}a(m)e\bigg(-\frac{m}{n}\bigg)+O_\varepsilon(yT^{1/2+\varepsilon}),
\end{equation*}
where the arithmetic function $a(m)$ is defined by
\begin{equation}\label{1}
\frac{\zeta'}{\zeta}(s+\beta)\zeta(s+\gamma)\zeta(s)\sum_{m\leq y}\frac{h(m)P([m]_y)}{m^{s}}=\sum_{m=1}^{\infty}\frac{a(m)}{m^s}.
\end{equation}
By the work of Conrey, Ghosh and Gonek [\textbf{\ref{CGG1}}; Sections 5--6 and (8.2)], and the work of Bui and Heath-Brown [\textbf{\ref{BH-B}}], we can write 
\begin{equation*}
\mathcal{N}_1(\beta,\gamma) =\mathcal{Q}(\beta,\gamma) + E+O_\varepsilon(yT^{1/2+\varepsilon}),
\end{equation*}
where
\begin{equation}\label{2}
\mathcal{Q}(\beta,\gamma)=\sum_{ln\leq y}\frac{h(ln)P([ln]_y)}{ln}\frac{\mu(n)}{\varphi(n)}\sum_{\substack{m\leq nT/2\pi\\(m,n)=1}}a(lm)
\end{equation}
and
\begin{equation*}
E\ll_{B,\varepsilon} T\mathscr{L}^{-B}+y^{1/3}T^{5/6+\varepsilon}
\end{equation*}
for any $B>0$.

Let 
\begin{equation}\label{301}
\frac{\zeta'}{\zeta}(s+\beta)\zeta(s+\gamma)\zeta(s)=\sum_{n=1}^{\infty}\frac{g(n)}{n^s}.
\end{equation}
From \eqref{1} and Lemma \ref{300} we have
\begin{equation*}
a(lm)=\sum_{\substack{l=l_1l_2\\m=m_1m_2\\l_1m_1\leq y\\(m_2,l_1)=1}} h(l_1m_1)P([l_1m_1]_y)g(l_2m_2).
\end{equation*}
Hence
\begin{eqnarray}\label{502}
\mathcal{Q}(\beta,\gamma)=\sum_{l_1l_2n\leq y}\frac{h(l_1l_2n)P([l_1l_2n]_y)}{l_1l_2n}\frac{\mu(n)}{\varphi(n)}\sum_{\substack{l_1m_1\leq y\\(m_1,n)=1}} h(l_1m_1)P([l_1m_1]_y)\sum_{\substack{m_2\leq nT/2\pi m_1\\(m_2,l_1n)=1}}g(l_2m_2).
\end{eqnarray}

\begin{lemma}
Suppose $a$ and $b$ are coprime, squarefree integers. Then we have
\begin{eqnarray*}
G(x;a,b)&:=&\sum_{\substack{n\leq x\\(n,b)=1}}g(an)\\
&=&-\frac{x^{1-\beta}}{1-\beta}\sum_{a=a_2a_3}\frac{1}{a_{2}^{\gamma}}\zeta(1-\beta+\gamma)\zeta(1-\beta)F(b,-\beta+\gamma)F(a_2b,-\beta)\\
&&+\frac{x^{1-\gamma}}{1-\gamma}\sum_{a=a_2a_3}\frac{1}{a_{2}^{\gamma}}\bigg(\frac{\zeta'}{\zeta}(1+\beta-\gamma)+\sum_{p|b}\frac{\log p}{p^{1+\beta-\gamma}-1}\bigg)\zeta(1-\gamma)F(b)F(a_2b,-\gamma)\\
&&-\frac{x^{1-\gamma}}{1-\gamma}\sum_{a=pa_2a_3}\frac{1}{p^\beta a_{2}^{\gamma}}\frac{\log p}{1-p^{-(1+\beta-\gamma)}}\zeta(1-\gamma)F(pb)F(pa_2b,-\gamma)\\
&&+x\sum_{a=a_2a_3}\frac{1}{a_{2}^{\gamma}}\bigg(\frac{\zeta'}{\zeta}(1+\beta)+\sum_{p|b}\frac{\log p}{p^{1+\beta}-1}\bigg)\zeta(1+\gamma)F(b,\gamma)F(a_2b)\\
&&-x\sum_{a=pa_2a_3}\frac{1}{p^\beta a_{2}^{\gamma}}\frac{\log p}{1-p^{-(1+\beta)}}\zeta(1+\gamma)F(pb,\gamma)F(pa_2b)\\
&&\qquad\ \ \  +O_{B,\varepsilon}\big((\log ab)^{1+\varepsilon}x(\log x)^{-B}\big).
\end{eqnarray*}
\end{lemma}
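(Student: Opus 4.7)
The plan is to apply a truncated Perron formula to $G(x;a,b)$ and shift the resulting contour past the poles of the associated Dirichlet series. As a first step, I would use Lemma~\ref{300} with $k = 3$ to identify
\[
\mathcal{D}(s) := \sum_{\substack{n\geq1\\(n,b)=1}} \frac{g(an)}{n^s}
\]
as an Euler product. Since $\zeta'/\zeta(s+\beta) = -\sum_n \Lambda(n) n^{-s-\beta}$ is supported on prime powers, the decomposition $a = l_1 l_2 l_3$ supplied by Lemma~\ref{300} forces $l_1$ to be either $1$ or a single prime $p \mid a$ (using that $a$ is squarefree); this dichotomy is precisely why the statement distinguishes the sums $\sum_{a = a_2 a_3}$ (the case $l_1=1$) and $\sum_{a = p a_2 a_3}$ (the case $l_1 = p$). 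The remaining variables $l_2, l_3$ run over coprime factorizations of the residual part of $a$, producing the weights $a_2^{-\gamma}$ coming from $\zeta(s+\gamma)$, and the coprimality restriction $(n,b)=1$ contributes the local correction factors that package into $F(b,\cdot)$ and $F(a_2b,\cdot)$.

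Next I would apply the truncated Perron formula with $c = 1 + 1/\log x$ to express $G(x;a,b)$ as a contour integral of $\mathcal{D}(s)\, x^s/s$ plus a tail error, and then shift the line of integration leftward to $\sigma = 1 - \delta/\log U$ for an appropriate truncation height $U$, inside the classical zero-free region of $\zeta$. The integrand has three simple poles inside the rectangle: $s = 1-\beta$ (from the simple pole of $\zeta'/\zeta(s+\beta)$ with residue $-1$), $s = 1-\gamma$ (from $\zeta(s+\gamma)$), and $s = 1$ (from $\zeta(s)$). None coincides for generic $\beta,\gamma$, and $1/s$ is regular throughout. Standard bounds for $\zeta$, $1/\zeta$ and $\zeta'/\zeta$ in and near the zero-free region, combined with $F_{\tau_0}(ab) \ll_\varepsilon (\log ab)^{\varepsilon}$, will yield the stated error $O_{B,\varepsilon}\bigl((\log ab)^{1+\varepsilon} x (\log x)^{-B}\bigr)$.

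The main obstacle will be the bookkeeping of residues. The pole at $s = 1-\beta$ is straightforward: the residue of $\zeta'/\zeta$ there replaces that factor by $-1$, and the remaining Euler factors evaluate cleanly to $\zeta(1-\beta+\gamma)\zeta(1-\beta)\,F(b,-\beta+\gamma)F(a_2b,-\beta)$, producing the first line of the formula. At $s = 1-\gamma$ and $s = 1$, by contrast, $\zeta'/\zeta(s+\beta)$ is regular but must be carried \emph{through} the Euler product implementing $(n,b) = 1$; the identity
\[
\frac{d}{ds}\log \prod_{p \nmid b}(1 - p^{-s})^{-1} \;=\; \frac{\zeta'}{\zeta}(s) + \sum_{p \mid b}\frac{\log p}{p^{s} - 1}
\]
is what generates the $\sum_{p\mid b}$ correction terms in the statement, while the $\sum_{a = pa_2 a_3}$ corrections arise from the branch of the Lemma~\ref{300} decomposition in which a distinguished prime $p \mid a$ supplies the $\Lambda$-factor, contributing $-\log p \cdot p^{-\beta}/(1 - p^{-(s+\beta)})$ to the Dirichlet series. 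Matching all of these local contributions into the compact $F(\cdot)$, $F(a_2b,\cdot)$, $F(pb,\cdot)$, $F(pa_2b,\cdot)$ form is the most delicate part, but it is mechanical once the Euler factors have been isolated and the poles evaluated.
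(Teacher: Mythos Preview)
Your proposal is correct and follows essentially the same route as the paper: apply Perron, use Lemma~\ref{300} to factor the Dirichlet series $\sum_{(n,b)=1} g(an)n^{-s}$ (with the $a_1\in\{1,p\}$ dichotomy exactly as you describe), and collect the residues at $s=1-\beta$, $1-\gamma$, $1$ inside the classical zero-free region. The paper is in fact terser than you are---it simply asserts the error is standard and writes the Euler product directly---so your more explicit account of the $\sum_{p\mid b}$ and $\sum_{a=pa_2a_3}$ corrections is a fuller version of the same argument.
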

\begin{proof}
It is standard that up to an error term of size $O_{B,\varepsilon}\big((\log ab)^{1+\varepsilon}x(\log x)^{-B}\big)$ for any $B>0$, $G(x;a,b)$ is the sum of the residues at $s=1-\beta$, $s=1-\gamma$ and $s=1$ of
\begin{equation*}
\frac{x^s}{s}\sum_{(n,b)=1}\frac{g(an)}{n^s}.
\end{equation*}
Combining \eqref{301} and Lemma \ref{300}, the above expression is
\begin{eqnarray*}
&&\frac{x^s}{s}\sum_{a=a_1a_2a_3}\bigg(-\sum_{(n,b)=1}\frac{\Lambda(a_1n)}{(a_1n)^{\beta}n^s}\bigg)\bigg(\sum_{(n,a_1b)=1}\frac{1}{(a_2n)^{\gamma}n^s}\bigg)\bigg(\sum_{(n,a_1a_2b)=1}\frac{1}{n^s}\bigg)\\
&=&\frac{x^s}{s}\sum_{a=a_1a_2a_3}\frac{1}{a_{1}^{\beta}a_{2}^{\gamma}}\bigg(-\sum_{(n,b)=1}\frac{\Lambda(a_1n)}{n^{s+\beta}}\bigg)\zeta(s+\gamma)\zeta(s)F(a_1b,s+\gamma-1)F(a_1a_2b,s-1).
\end{eqnarray*}
We have
\begin{displaymath}
-\sum_{(n,b)=1}\frac{\Lambda(a_1n)}{n^{s+\beta}}=\left\{ \begin{array}{ll}
\frac{\zeta'}{\zeta}(s+\beta)+\sum_{p|b}\frac{\log p}{p^{s+\beta}-1} &\qquad \textrm{if $a_1=1$},\\
-\frac{\log p}{1-p^{-(s+\beta)}} & \qquad\textrm{if $a_1=p$,}\\
0 & \qquad\textrm{otherwise}.
\end{array} \right.
\end{displaymath}
The result follows.
\end{proof}

In view of the above definition, the innermost sum in \eqref{502} is
\begin{equation*}
G(nT/2\pi m_1;l_2,l_1n).
\end{equation*}
We then write
\begin{equation*}
\mathcal{Q}(\beta,\gamma)=\sum_{j=1}^{6}\mathcal{Q}_j(\beta,\gamma)
\end{equation*}
corresponding to the decomposition of $G(x;a,b)$ in Lemma 4.1.

We begin with $\mathcal{Q}_1(\beta,\gamma)$. Writing $l_2l_3$ for $l_2$, and $m$ for $m_1$, we have $\mathcal{Q}_1(\beta,\gamma)$ equals
\begin{eqnarray*}
&&-\frac{(T/2\pi)^{1-\beta}}{1-\beta}\zeta(1-\beta+\gamma)\zeta(1-\beta)\sum_{\substack{l_1l_2l_3\leq y\\l_1m\leq y}}\frac{h(l_1l_2l_3)h(l_1m)P([l_1m]_y)}{l_1l_{2}^{1+\gamma}l_3m^{1-\beta}}F(l_1,-\beta+\gamma)F(l_1l_2,-\beta)\\
&&\qquad\sum_{\substack{n\leq y/l_1l_2l_3\\(n,l_1l_2l_3m)=1}}\frac{\mu(n)h(n)P([l_1l_2l_3n]_y)}{\varphi(n)n^\beta}F(n,-\beta+\gamma)F(n,-\beta).
\end{eqnarray*}
From Lemma \ref{600}, the innermost sum is
\begin{eqnarray*}
&&U_1V_1(l_1l_2l_3m)\bigg(\frac{P''([l_1l_2l_3]_y)}{(\log y)^2}+\frac{2\beta P'([l_1l_2l_3]_y)}{\log y}+\beta^{2}P([l_1l_2l_3]_y)\bigg)\\
&&\qquad+O(F_{\tau_0}(l_1l_2l_3m)L^{-3})+O_\varepsilon\bigg(F_{\tau_0}(l_1l_2l_3m)\bigg(\frac{y}{l_1l_2l_3}\bigg)^{-\nu}L^{-2+\varepsilon}\bigg).
\end{eqnarray*}
By Lemma \ref{505}, the contributions of the $O$-terms to $\mathcal{Q}_1(\beta,\gamma)$ is $O_\varepsilon(TL^{9+\varepsilon})$. Hence 
\begin{eqnarray*}
&&\mathcal{Q}_1(\beta,\gamma)=-U_1(T/2\pi)^{1-\beta}\zeta(1-\beta+\gamma)\zeta(1-\beta)\sum_{l_1l_2\leq y}\frac{F(l_1,-\beta+\gamma)F(l_1l_2,-\beta)}{l_1l_{2}^{1+\gamma}}\\
&&\qquad\bigg(\frac{\mathcal{A}_2(y,P,P'';l_1,l_2,-\beta)}{(\log y)^2}+\frac{2\beta \mathcal{A}_2(y,P,P';l_1,l_2,-\beta)}{\log y}+\beta^2\mathcal{A}_2(y,P,P;l_1,l_2,-\beta)\bigg)\\
&&\qquad\qquad+O_\varepsilon(TL^{9+\varepsilon}).
\end{eqnarray*}
Using Lemmas \ref{601}--\ref{602} we obtain
\begin{eqnarray}\label{800}
&&\mathcal{Q}_1(\beta,\gamma)=-\frac{A(T/2\pi)^{1-\beta}(\log y)^{10}}{6}\zeta(1-\beta+\gamma)\zeta(1-\beta)\int_{0}^{1}\int_{0}^{x}\int_{0}^{x}(1-x)^3y^{\beta t-\gamma t_1}tt_1\nonumber\\
&&\qquad P(x-t)\bigg(\frac{P(x-t_1)}{(\log y)^2}+\frac{2\beta P_0(x-t_1)}{\log y}+\beta^2P_1(x-t_1)\bigg)dtdt_1dx+O_\varepsilon(TL^{9+\varepsilon}).
\end{eqnarray}
Here we have used a fact which is easy to verify that $U_1U_2W=A$.

For $\mathcal{Q}_2(\beta,\gamma)$, we write the sum $\sum_{p|l_1n}$ as $\sum_{p|l_1}+\sum_{p|n}$, since the function $h(n)$ is supported on square-free integers. In doing so we have $\mathcal{Q}_2(\beta,\gamma)$ equals
\begin{eqnarray}\label{605}
&&\frac{(T/2\pi)^{1-\gamma}}{1-\gamma}\zeta(1-\gamma)\sum_{\substack{l_1l_2l_3\leq y\\l_1m\leq y}}\frac{h(l_1l_2l_3)h(l_1m)P([l_1m]_y)}{l_1l_{2}^{1+\gamma}l_3m^{1-\gamma}}\bigg(\frac{\zeta'}{\zeta}(1+\beta-\gamma)+\sum_{p|l_1}\frac{\log p}{p^{1+\beta-\gamma}-1}\bigg)\nonumber\\
&&\qquad\qquad F(l_1)F(l_1l_2,-\gamma)\sum_{\substack{n\leq y/l_1l_2l_3\\(n,l_1l_2l_3m)=1}}\frac{\mu(n)h(n)P([l_1l_2l_3n]_y)}{\varphi(n)n^{\gamma}}F(n)F(n,-\gamma)\nonumber\\
&&\qquad+\frac{(T/2\pi)^{1-\gamma}}{1-\gamma}\zeta(1-\gamma)\sum_{\substack{l_1l_2l_3\leq y\\l_1m\leq y}}\frac{h(l_1l_2l_3)h(l_1m)P([l_1m]_y)}{l_1l_{2}^{1+\gamma}l_3m^{1-\gamma}}F(l_1)F(l_1l_2,-\gamma)\nonumber\\
&&\qquad\qquad\sum_{\substack{p|n\\n\leq y/l_1l_2l_3\\(n,l_1l_2l_3m)=1}}\frac{\log p}{p^{1+\beta-\gamma}-1}\frac{\mu(n)h(n)P([l_1l_2l_3n]_y)}{\varphi(n)n^{\gamma}}F(n)F(n,-\gamma).
\end{eqnarray}
We consider the contribution from the terms $\sum_{p|l_1}$. From Lemma \ref{600}, the sum over $n$ is 
\begin{equation*}
\ll L^{-2}+F_{\tau_0}(l_1l_2l_3m)L^{-3}+O_\varepsilon\bigg(F_{\tau_0}(l_1l_2l_3m)\bigg(\frac{y}{l_1l_2l_3}\bigg)^{-\nu}L^{-2+\varepsilon}\bigg).
\end{equation*}
Hence the contribution of the terms $\sum_{p|l_1}$ to $\mathcal{Q}_2(\beta,\gamma)$ is
\begin{eqnarray*}
&\ll_\varepsilon& TL^{-1}\sum_{\substack{p|l_1\\l_1l_2l_3\leq y\\l_1m\leq y}}\frac{\log p}{p-1}\frac{d_4(l_1)d(l_2)d(l_3)d(m)}{l_1l_2l_3m}\\
&&\qquad\qquad\qquad\qquad\bigg(1+F_{\tau_0}(l_1l_2l_3m)L^{-1}+F_{\tau_0}(l_1l_2l_3m)\bigg(\frac{y}{l_1l_2l_3}\bigg)^{-\nu}L^{\varepsilon}\bigg)\\
&\ll_\varepsilon&TL^{5}\sum_{\substack{p|l_1\\l_1\leq y}}\frac{\log p}{p-1}\frac{d_4(l_1)}{l_1}\big(1+F_{\tau_0}(l_1)L^{-1+\varepsilon}\big)\ll_\varepsilon TL^{9+\varepsilon}.
\end{eqnarray*}
The same argument shows that the last term in \eqref{605} is also $O_\varepsilon(TL^{9+\varepsilon})$. The remaining terms are
\begin{eqnarray*}
&&\frac{(T/2\pi)^{1-\gamma}}{1-\gamma}\frac{\zeta'}{\zeta}(1+\beta-\gamma)\zeta(1-\gamma)\sum_{\substack{l_1l_2l_3\leq y\\l_1m\leq y}}\frac{h(l_1l_2l_3)h(l_1m)P([l_1m]_y)}{l_1l_{2}^{1+\gamma}l_3m^{1-\gamma}}\nonumber\\
&&\qquad\qquad F(l_1)F(l_1l_2,-\gamma)\sum_{\substack{n\leq y/l_1l_2l_3\\(n,l_1l_2l_3m)=1}}\frac{\mu(n)h(n)P([l_1l_2l_3n]_y)}{\varphi(n)n^{\gamma}}F(n)F(n,-\gamma).
\end{eqnarray*}
Similarly to $\mathcal{Q}_1(\beta,\gamma)$, we thus obtain
\begin{eqnarray}\label{801}
&&\mathcal{Q}_2(\beta,\gamma)=\frac{A(T/2\pi)^{1-\gamma}(\log y)^{10}}{6}\frac{\zeta'}{\zeta}(1+\beta-\gamma)\zeta(1-\gamma)\int_{0}^{1}\int_{0}^{x}\int_{0}^{x}(1-x)^3y^{\gamma (t-t_1)}tt_1\nonumber\\
&&\qquad P(x-t)\bigg(\frac{P(x-t_1)}{(\log y)^2}+\frac{2\gamma P_0(x-t_1)}{\log y}+\gamma^2P_1(x-t_1)\bigg)dtdt_1dx+O_\varepsilon(TL^{9+\varepsilon}).
\end{eqnarray}

The fourth term $\mathcal{Q}_4(\beta,\gamma)$ is in the same form as $\mathcal{Q}_2(\beta,\gamma)$. The same calculations yield
\begin{eqnarray}\label{802}
\mathcal{Q}_4(\beta,\gamma)&=&\frac{A(T/2\pi)(\log y)^8}{6}\frac{\zeta'}{\zeta}(1+\beta)\zeta(1+\gamma)\int_{0}^{1}\int_{0}^{x}(1-x)^3y^{-\gamma t_1}t_1\nonumber\\
&&\qquad\qquad P_1(x)P(x-t_1)dt_1dx+O_\varepsilon(TL^{9+\varepsilon}).
\end{eqnarray}

To evaluate $\mathcal{Q}_3(\beta,\gamma)$, we rearrange the sums and write $\mathcal{Q}_3(\beta,\gamma)$ in the form
\begin{eqnarray*}
&&-\frac{(T/2\pi)^{1-\gamma}}{1-\gamma}\zeta(1-\gamma)\sum_{\substack{pl_1l_2l_3\leq y\\l_1m\leq y}}\frac{\log p}{(p^{1+\beta-\gamma}-1)p^{\gamma}}\frac{h(pl_1l_2l_3)h(l_1m)P([l_1m]_y)}{l_1l_{2}^{1+\gamma}l_3m^{1-\gamma}}\\
&&\qquad F(pl_1)F(pl_1l_2,-\gamma)\sum_{\substack{n\leq y/pl_1l_2l_3\\(n,pl_1l_2l_3m)=1}}\frac{\mu(n)h(n)P([pl_1l_2l_3n]_y)}{\varphi(n)n^{\gamma}}F(n)F(n,-\gamma).
\end{eqnarray*}
By Lemma \ref{600}, the innermost sum is
\begin{eqnarray*}
&&U_1V_1(pl_1l_2l_3m)\bigg(\frac{P''([pl_1l_2l_3]_y)}{(\log y)^2}+\frac{2\gamma P'([pl_1l_2l_3]_y)}{\log y}+\gamma^{2}P([pl_1l_2l_3]_y)\bigg)\\
&&\qquad\qquad+O(F_{\tau_0}(pl_1l_2l_3m)L^{-3})+O_\varepsilon\bigg(F_{\tau_0}(pl_1l_2l_3m)\bigg(\frac{y}{pl_1l_2l_3}\bigg)^{-\nu}L^{-2+\varepsilon}\bigg).
\end{eqnarray*}
The contribution of the $O$-terms, using Lemma \ref{505}, is $O_\varepsilon(TL^{9+\varepsilon})$. The remaining terms contribute
\begin{eqnarray*}
&&-\frac{U_1(T/2\pi)^{1-\gamma}}{(1-\gamma)}\zeta(1-\gamma)\sum_{pl_1l_2\leq y}\frac{\log p}{(p^{1+\beta-\gamma}-1)p^{\gamma}}\frac{F(pl_1)F(pl_1l_2,-\gamma)}{l_1l_{2}^{1+\gamma}}\\
&&\qquad\bigg(\frac{\mathcal{A}_2(y,P,P'';l_1,pl_2,-\gamma)}{(\log y)^2}+\frac{2\gamma \mathcal{A}_2(y,P,P';l_1,pl_2,-\gamma)}{\log y}+\gamma^2\mathcal{A}_2(y,P,P;l_1,pl_2,-\gamma)\bigg).
\end{eqnarray*}
In view of Lemma \ref{601}, this equals
\begin{eqnarray*}
&&-U_1U_2(T/2\pi)^{1-\gamma}(\log y)^4\zeta(1-\gamma)\sum_{pl_1l_2\leq y}\frac{\log p}{(p^{1+\beta-\gamma}-1)p^{\gamma}}\frac{h(pl_1l_2)h(l_1)}{l_1l_{2}^{1+\gamma}}\\
&&\qquad F(pl_1)F(pl_1l_2,-\gamma)V_1(pl_1l_2)V_2(l_1)V_3(pl_2)V_4(pl_1l_2)\int_{0}^{[l_1]_y}y^{\gamma t}tP([l_1]_y-t)\\
&&\qquad\qquad \bigg(\frac{P([pl_1l_2]_y)}{(\log y)^2}+\frac{2\gamma P_0([pl_1l_2]_y)}{\log y}+\gamma^2P_1([pl_1l_2]_y)\bigg)dt+O(TL^9).
\end{eqnarray*}
From Lemma 2.8(ii) we obtain
\begin{eqnarray}\label{803}
&&\!\!\!\!\!\!\!\!\mathcal{Q}_3(\beta,\gamma)=-\frac{A(T/2\pi)^{1-\gamma}(\log y)^{11}}{3}\zeta(1-\gamma)\int_{0}^{1}\int_{\substack{t,t_j\geq 0\\t\leq x\\t_1+t_2\leq x}}(1-x)^3y^{\gamma(t-t_1)-\beta t_2}tt_1P(x-t)\nonumber\\
&&\!\!\!\!\!\!\!\!\  \bigg(\frac{P(x-t_1-t_2)}{(\log y)^2}+\frac{2\gamma P_0(x-t_1-t_2)}{\log y}+\gamma^2P_1(x-t_1-t_2)\bigg)dtdt_1dt_2dx+O_\varepsilon(TL^{9+\varepsilon}).
\end{eqnarray}

The term $\mathcal{Q}_5(\beta,\gamma)$ is in the same form as $\mathcal{Q}_3(\beta,\gamma)$. The same calculations give
\begin{eqnarray}\label{804}
\mathcal{Q}_5(\beta,\gamma)&=&-\frac{A(T/2\pi)(\log y)^9}{3}\zeta(1+\gamma)\int_{0}^{1}\int_{\substack{t_j\geq 0\\t_1+t_2\leq x}}(1-x)^3y^{-\gamma t_1-\beta t_2}t_1\nonumber\\
&&\qquad\qquad P_1(x)P(x-t_1-t_2)dt_1dt_2dx+O_\varepsilon(TL^{9+\varepsilon}).
\end{eqnarray}

Finally, we have $\mathcal{Q}_6(\beta,\gamma)=O_B(TL^{-B})$ for any $B>0$.

Collecting the estimates \eqref{806}, \eqref{805}, \eqref{800}, \eqref{801}--\eqref{804}, and letting $\beta=- i\alpha$, $\gamma\rightarrow 0$ we easily obtain Lemma 1.2.

\end{document}